\newtheorem{remark}{Remark}[section]
\newtheorem{assumption}{Assumption}[section]
\newcommand{\beq}{\begin{equation}}
\newcommand{\eeq}{\end{equation}}
\newcommand{\C}{\mathbb C}
\newcommand{\M}{{\mathcal M}}
\newcommand{\R}{\mathbb R}
\def\iu{{\rm i}}
\def\eps{\varepsilon}
\def\sym{\,\mathrm{sym}}
\newcommand{\oeps}{\widehat\eps}
\newcommand{\aleps}{\alpha_\varepsilon}
\renewcommand{\Re}{{\mbox{\rm Re\,}}}
\newcommand{\conj}[1]{\overline{#1}}
\newcommand{\bng}{\color{black}}
\newcommand{\eng}{\color{black}}
\newcommand{\bcl}{\color{black}}
\newcommand{\ecl}{\color{black}}
\title{Finding the nearest passive or non-passive system via Hamiltonian eigenvalue optimization}
\author{---}
\author{Antonio Fazzi\footnotemark[1] \and Nicola Guglielmi\footnotemark[2] \and Christian Lubich\footnotemark[3]}
\begin{document}

\maketitle

\renewcommand{\thefootnote}{\fnsymbol{footnote}}
\footnotetext[1]{Department ELEC, Vrije Universiteit Brussel (VUB), Pleinlaan      2, 1050 Brussels, Belgium. Email: {\tt Antonio.Fazzi@vub.be}}
\footnotetext[2]{Gran Sasso Science Institute, via Crispi 7, L' Aquila,  Italy. Email: {\tt nicola.guglielmi@gssi.it}}
\footnotetext[3]{Mathematisches Institut,
       Universit\"at T\"ubingen,
       Auf der Morgenstelle 10,
       D--72076 T\"ubingen,
       Germany. Email: {\tt lubich@na.uni-tuebingen.de}}
\renewcommand{\thefootnote}{\arabic{footnote}}

\begin{abstract} We propose and study an algorithm for computing a nearest passive system to a given non-passive linear time-invariant system (with much freedom in the choice of the metric defining `nearest', which may be restricted to structured perturbations), and also a closely related algorithm for computing the structured distance of a given passive system to non-passivity. Both problems are addressed by solving eigenvalue optimization problems for Hamiltonian matrices that are constructed from perturbed system matrices. The proposed algorithms are two-level methods that optimize the Hamiltonian eigenvalue of smallest positive real part over perturbations of a fixed size in the inner iteration, using a constrained gradient flow. They optimize over the perturbation size in the outer iteration, which is shown to converge quadratically in the typical case of a defective coalescence of simple eigenvalues approaching the imaginary axis. 
For large systems, we propose a variant of the algorithm that takes advantage of the inherent low-rank structure of the problem. Numerical experiments illustrate the behavior of the proposed algorithms.
\end{abstract}

\begin{keywords} passive control system, structured passivity enforcement, real passivity radius,  Hamiltonian matrix, matrix nearness problem, eigenvalue optimization
\end{keywords}

\begin{AMS}65K05, 93B40, 93B60, 93C05, 15A18 \end{AMS}

\pagestyle{myheadings}
\thispagestyle{plain}
\markboth{A.~Fazzi, N.~Guglielmi and C.~Lubich}{Passivation and distance to non-passivity}

%
%
%

\section{Introduction}
\label{sec:intro}
\subsection{Preliminaries}


We consider 
 linear time-invariant dynamical systems expressed by their state space representation:
\begin{equation}
\label{eq:controlsystem}
\begin{aligned}
\dot{x}(t) &= Ax(t) + Bu(t) \\
y(t) &= Cx(t) + Du(t),
\end{aligned}
\end{equation}
where $A \in \R^{n \times n}, B \in \R^{n \times m}, C \in \R^{p \times n}, D \in \R^{p \times m}$ and we take zero initial values. As for terminology, $x(t)\in \R^n$ is the state vector, while $u(t)\in \R^m$ and $y(t)\in \R^p$ are the input and the output vector, respectively. A system of the form \eqref{eq:controlsystem} is identified by the block matrix 
\begin{equation} \label{X}
X = \begin{pmatrix}
A & B \\ C & D
\end{pmatrix} \in \R^{(n+p)\times(n+m)}
\end{equation}
that collects all the parameters chosen for its representation.
%
\bcl
The following notions of  dissipativity or passivity\footnote{The nomenclature varies among different scientific and engineering communities.} will be central to this paper.
A system \eqref{eq:controlsystem} that is quadratic, i.e., $p=m$, is called {\it passive} (or {\it passive in the immittance representation}) if every input $u\in L^2(\R_+,\R^m)$ and its corresponding output $y$ satisfy the relation
\begin{equation}\label{passive-immittance}
\int_0^\infty y(t)^\top u(t)\, dt \ge 0.
\end{equation}
A system \eqref{eq:controlsystem} is called {\it contractive} (or {\it passive in the scattering representation}) if every input $u\in L^2(\R_+,\R^m)$ bounds the corresponding output $y$ by
\begin{equation}\label{passive-scattering}
\int_0^{\infty} \|y(t) \|^2 \  dt \leq \int_0^{\infty} \| u(t) \|^2 \,  dt,
\end{equation}
where the norm $\|\cdot\|$ is the Euclidean norm. 

\subsection{Objective}
\label{subsec:objective}
%
In this paper we consider algorithms for the following two problems, which we first state in an idealized version that will be reformulated later. Here `passive' refers to either \eqref{passive-immittance} or \eqref{passive-scattering} and the notion `nearest' is yet to be specified:
\begin{itemize}
\item {\it Passivity enforcement.} Given a non-passive system  \eqref{X}, 
compute a nearest perturbed system   $X + \Delta X$   that is passive.
\item {\it Distance to the nearest non-passive system.} Given a passive system \eqref{X},
compute a nearest perturbed system   $X + \Delta X$   that is no more passive.
\end{itemize}
The two problems can be combined in asking for a nearest  passive system whose distance to non-passivity is at least $\delta$.
\ecl

`Nearest' could refer to the distance to $X$ in the Frobenius norm or a weighted Frobenius norm, but different concepts of nearness are often imposed in the engineering literature. One approach, described in \cite{Grivet2004,Grivet2015}, is to keep the matrices $A,D$ and also $B$ fixed and to perturb only the state-output matrix $C$ in such a way that the Frobenius norm of $\Delta C\, Q^T$ is kept minimal, where $Q^T\in \R^{n\times n}$ is a Cholesky factor of the controllability Gramian $G_c=Q^TQ$. This Gramian depends only on the matrices $A$ and $B$ and is positive definite for a controllable system, as will be assumed henceforth when we consider this guiding example. In other cases, nearness may also be understood under constraints of preserving a sparsity pattern or symmetry.

A framework that includes all nearness options in the literature as far as known to us, and which allows for {\it structured passivation}, is to specify a linear map $L:\R^{k\times l} \to \R^{(n+p)\times(n+m)}$ and to consider only perturbations of the form\footnote{In this paper we put arguments of linear maps in square brackets.}
\begin{equation}\label{L}
\Delta X = L[\Delta Z]\ \text{ and minimize } \| \Delta Z \|_F.
\end{equation}
In the example considered above, where only $C$ is perturbed and $\|\Delta C\, Q^T\|_F$ is to be minimized, we have $k=p$ and $l=n$ and
\begin{equation}\label{L-example}
L[\Delta Z] = \begin{pmatrix} 0_{n\times n} & 0_{n\times m} \\ \Delta Z \,Q^{-T} & 0_{p\times m} \end{pmatrix} .
\end{equation}
Another case of interest is sparsity-preserving passivation, where $L[\Delta Z]$ represents a sparse matrix whose nonzero entries are collected in a vector $\Delta Z$. 

\bcl
Passivity enforcement has been addressed in the literature with various approa\-ches. Grivet-Talocia \cite{Grivet2004}  used the characterization of passivity in terms of Hamiltonian matrices and applied first-order perturbation theory to move and coalesce imaginary eigenvalues of the associated Hamiltonian matrix. Schr\"oder \& Stykel \cite{SchS07} took that approach further, using structure-preserving algorithms to compute the required eigenvalues and eigenvectors. Br\"ull \& Schr\"oder \cite{BruS13}  substantially extended the approach to  descriptor systems and to more general notions of dissipativity that they characterized in terms of structured matrix pencils, which they used together with first-order perturbation theory to move imaginary eigenvalues. Gillis \& Sharma \cite{GilS18} studied the problem of finding the nearest port-Hamiltonian descriptor system. Such systems are always passive and conversely, with the appropriate terminology, every extended strictly passive system is port-Hamiltonian. For related work on port-Hamiltonian systems see also \cite{GilS17,MehMS16,MehMS17}.

The problem of determining the distance to non-passivity, or passivity radius, was studied by Overton \& Van Dooren \cite{OveVD05} for complex perturbations of the system matrices, but the case of real perturbations as considered here has been left open.

\subsection{Recap: Hamiltonian matrices and pencils related to passivity}
\label{subsec:intro-ham}
By the Plan\-cherel theorem, the passivity notions \eqref{passive-immittance} and \eqref{passive-scattering} can be characterized in terms of the transfer function $H(s) = C(s I - A)^{-1} B + D$,
which  describes the relation between the Laplace transform of the  input and the Laplace transform of the output through the expression $\mathcal{L}y(s) = H(s) \mathcal{L}u(s)$, for $\Re s \ge 0$. 

Passivity \eqref{passive-immittance} is equivalent to
\begin{equation}\label{pr}
H(s)+H(s)^* \text{ is positive semidefinite for $\Re s \ge 0$}.
\end{equation}

Contractivity \eqref{passive-scattering} is equivalent to
\begin{equation}\label{br}
I-H(s)^*H(s) \text{ is positive semidefinite for $\Re s \ge 0$}.
\end{equation}
In a field rich in terminology, the properties \eqref{pr} and \eqref{br} of the transfer function are known as {\it positive realness} and {\it bounded realness}, respectively. In the following it will always be assumed that
$$
\text{the eigenvalues of the state space matrix $A$ have negative real part.}
$$ 
It follows that the poles of the transfer function $H(s)$ have negative real part, and by the maximum principle, it then suffices that conditions \eqref{pr} and \eqref{br} be satisfied for $s=\iu\omega$ on the imaginary axis.
 
It is a remarkable fact, going back to Boyd, Balakrishnan \& Kabamba~\cite[Theorem~2]{Boyd1989} and fully presented, e.g., by Grivet-Talocia \& Gustavsen~\cite[Chapter 9]{Grivet2015},
that strict positive realness  and strict bounded realness  (i.e., \eqref{pr} and \eqref{br} with `positive definite' instead of `positive semidefinite') are characterized by the location of the
eigenvalues of an extended Hamiltonian matrix pencil that contains the system matrices $(A,B,C,D)$ and their transposes as blocks. Under a nondegeneracy condition on the matrix $D$, these eigenvalues are  those of a Hamiltonian matrix built from the system matrices. This eigenvalue criterion eliminates any dependence on the complex frequencies~$s$.

We recall that a real Hamiltonian matrix  is a matrix $M \in \R^{2n \times 2n}$ that satisfies
\begin{equation}
 (JM)^{\top} = JM, \ \ \ \text{with } \ J=\begin{pmatrix}
0 & I_n \\ -I_n & 0
\end{pmatrix}.
\end{equation}
The spectrum of a real Hamiltonian matrix is known to be symmetric with respect to both the real and the imaginary axis.

Strict positive realness  is characterized by the condition that {\it the extended Hamiltonian matrix pencil
\begin{equation}\label{mp-p}
\lambda
\begin{pmatrix}
I_n & 0 & 0 \\
0 & I_n & 0 \\
0 & 0 & 0 
\end{pmatrix}
-
\begin{pmatrix}
A & 0 & B \\
0 & -A^\top & -C^\top \\
C & B^\top & D+D^\top
\end{pmatrix}
\end{equation}
has no eigenvalues on the imaginary axis}. If $D+D^\top$ is positive definite, then this matrix pencil has the same eigenvalues as
the {\it Hamiltonian matrix}
\begin{equation}
\label{M-p}
M_p(X) =  \begin{pmatrix}
A & 0\\ 0 & -A^T
\end{pmatrix}
-
 \begin{pmatrix}
B \\-C^\top
\end{pmatrix}
(D+D^\top)^{-1}
 \begin{pmatrix}
C^\top \\ B
\end{pmatrix}^\top\ 
 \in \R^{2n\times 2n}.
\end{equation}

Strict bounded realness  is characterized by the condition that {\it the extended Hamiltonian matrix pencil
\begin{equation}\label{mp-b}
\lambda
\begin{pmatrix}
I_n & 0 & 0 & 0\\
0 & I_n & 0 &0\\
0 & 0 & 0 & 0 \\
0 & 0 & 0 & 0 
\end{pmatrix}
-
\begin{pmatrix}
A & 0 & B & 0\\
0 & -A^\top &0 & -C^\top \\
0 & B^\top & - I_m & D^\top \\
C & 0 & D & I_p
\end{pmatrix}
\end{equation}
has no eigenvalues on the imaginary axis}. 
If $I-D^\top D$ is positive definite (i.e., ${\| D\|_2 <1}$), then this matrix pencil has the same eigenvalues as
the {\it Hamiltonian matrix} 
\begin{equation}
\label{M-b}
M_b(X) = \begin{pmatrix}
A & 0\\ -C^\top C & -A^T
\end{pmatrix}
+
 \begin{pmatrix}
B \\-C^\top D 
\end{pmatrix}
(I-D^\top \! D)^{-1}
 \begin{pmatrix}
C^\top D \\ B
\end{pmatrix}^\top\ 
 \in \R^{2n\times 2n}.
\end{equation}
Concerning the computation of all or a selection of these eigenvalues, it seems to depend on the situation whether it is preferable to use an eigenvalue algorithm for Hamiltonian matrices or for extended Hamiltonian matrix pencils, as implemented in the SLICOT library (\cite{slicot}, {\tt http://slicot.org/}) for either option. For ill-conditioned matrices $D+D^\top$ or
$I-D^\top D$, working with the pencil appears preferable.
In any case, using a structure-preserving eigenvalue solver is often distinctly favourable over using a standard general eigenvalue solver, especially for eigenvalues on and close to the imaginary axis as are of interest  here; see, e.g., \cite{BenLMV15} and references therein.

For large sparse systems for which shifted linear systems of equations with $A$ and $A^\top$ can be solved efficiently and for which the states have much higher dimension than the inputs and outputs,  the fact that the above Hamiltonian matrices are low-rank perturbations to the block-diagonal matrix with blocks $A$ and $-A^\top$, is beneficial for the computation of selected eigenvalues; see \cite[Section~9.4.2]{Grivet2015} and Section~\ref{sec:low-rank} below.

\subsection{Outline of the approach of this paper}
We address the two problems stated in Subsection~\ref{subsec:objective} in the following (closely related but not fully equivalent) reformulation as a Hamiltonian eigenvalue optimization problem, where $M(X)$ is either $M_p(X)$ or $M_b(X)$ and the metric defining `nearest' is chosen in the general form \eqref{L}, with \eqref{L-example} as our guiding example.
\ecl
\begin{enumerate}
\item {\it (Passivity enforcement).} Given a system  \eqref{X} 
for which the Hamil\-tonian matrix $M(X)$ has some purely imaginary eigenvalues, 
and given $\delta>0$, compute a nearest perturbed system   $X + \Delta X$ such that all eigenvalues of $M(X + \Delta X)$ have a real part of absolute value not smaller than~$\delta$.
\item {\it (Distance to the nearest non-passive system).} Given a  system \eqref{X} 
for which the Hamiltonian matrix $M(X)$ has no purely imaginary eigenvalues, 
compute a nearest perturbed system   $X + \Delta X$   such that the Hamiltonian matrix $M(X + \Delta X)$ has at least one pair of purely imaginary eigenvalues.
\end{enumerate}
Our algorithms for Problems 1 and 2 can be combined in searching for a nearest passive system whose distance to non-passivity is at least $\delta$.

\bcl
In both Problems 1 and 2, the perturbations of the system matrix need to be constrained such that the asymptotic stability of the perturbed state-space matrix $A$ is preserved, and there are also positivity or boundedness constraints on the perturbed matrix $D$. This is trivially satisfied if $A$ and $D$ are not subjected to perturbations. Strong arguments for this case are given in \cite[Section~10.2.5]{Grivet2015}. 
\ecl

Our proposed algorithms are {\it two-level iterative methods} similar to \cite{Guglielmi2015,Guglielmi2017}. In the {\it inner iteration}, for a given $\eps>0$, we use a gradient ascent / descent algorithm to solve the eigenvalue optimization problem, over $E\in \R^{k\times l}$ with $\| E \|_F =1$, 
\begin{equation}\label{eig-opt}
E_\eps = \begin{matrix} \arg \max \\ \arg \min \end{matrix} \ 
\phi_\eps(E)  \quad\text{ with }\quad \phi_\eps(E)= \Re \lambda(M(X+ L[\eps E])),
\end{equation}
where the maximum is taken for Problem 1 and the minimum for Problem 2. Here,
$\lambda(M)$ is an eigenvalue of minimal nonnegative real part (chosen with the largest, nonnegative imaginary part) of a real Hamiltonian matrix $M$.
\bcl
For both Problems~1 and~2, the optimization is done under the (inequality) constraint that the eigenvalues of the perturbed state space matrix have a real part smaller than some given negative threshold.
\ecl
The {\it outer iteration} determines the  smallest $\eps$ such that
$\phi_\eps(E_\eps)= \delta$ for a given small threshold $\delta>0$. \bcl This uses a mixed Newton/bisection method and, for very small $\delta$, the asymptotic square-root behavior $ \phi_\eps(E_\eps)\sim \sqrt{\eps-\oeps}$ as the eigenvalue tends to the imaginary axis
at perturbation size $\oeps$. \ecl

 \bcl
In contrast to existing passivity enforcement algorithms, we do not move Hamiltonian eigenvalues on the imaginary axis. Instead, we move eigenvalues with smallest positive real part toward the imaginary axis, starting from a non-optimal passive perturbation $X+L[\Delta Z_0]$ of the original, non-passive system. The matrix $\Delta Z_0=\eps_0 E_0$ provides an initial iterate for an iteration over passive matrices that tend to $X+L[\widehat{\Delta Z}]$ with $\widehat{\Delta Z}=\oeps E(\oeps)$ of (locally) minimal Frobenius norm, gradually reducing the perturbation size from $\eps_0$ down to the optimal $\oeps$.
The algorithm can thus be used to improve the results of some existing computationally inexpensive but non-optimal passivity-enforcing algorithm by taking its result as starting value.
\ecl

Both Problems 1 and 2 are nonconvex and nonsmooth optimization problems. Our algorithm is not guaranteed to find the global optimum but it computes a passive perturbation of the original system for Problem 1 and a non-passive perturbation for Problem 2 that are locally optimal. Running the algorithm with several different starting values reduces the potential risk of getting stuck in a local optimum.

\ecl
The related problem of finding a nearest Hamiltonian matrix with / without eigenvalues on the imaginary axis has previously been addressed in
\cite{Alam2011, Guglielmi2015}. However, there the resulting perturbed Hamiltonian matrix  is not, in general, a matrix $M(X+\Delta X)$ associated with some linear control system $X+\Delta X$. In contrast, in this paper we put a structured perturbation $\Delta X= L[\Delta Z]$ directly on $X$ and not on the Hamiltonian matrix, and we minimize the Frobenius norm of $\Delta Z$  as opposed to the Frobenius-norm distance between the perturbed and unperturbed associated Hamiltonian matrices.

\subsection{Contents}
\bcl
In Section~\ref{sec:E} we study the optimization of  the eigenvalue of smallest positive real part of Hamiltonian matrices corresponding to structured perturbations $L[\Delta Z]$ of the system matrix, with $\Delta Z$ of a fixed norm~$\eps$. We derive a constrained gradient system and give a characterization of its stationary points.

In Section~\ref{sec:eps} we study, for Problem 1 (passivity enforcement), the equation for determining the optimal perturbation size $\eps$, which is near a point of coalescence on the imaginary axis of a pair of eigenvalues off the imaginary axis.  Under appropriate assumptions, we prove the asymptotic square-root behavior $ \phi_\eps(E_\eps)\sim \sqrt{\eps-\oeps}$ for the real part of the optimal eigenvalue corresponding to perturbation size $\eps$ close to coalescence at $\oeps$.\ecl

In Section~\ref{sec:alg1} we describe the two-level algorithm for Problem~1. 
This algorithm determines an optimal passivity-enforcing perturbation $\Delta X=L[\Delta Z]$ with $\Delta Z=\eps E$ and $E$ of unit Frobenius norm, where in the inner iteration an optimal matrix $E(\eps)$ is computed for a fixed perturbation size $\eps$, following the constrained gradient flow of Section~\ref{sec:E}, and the optimal perturbation size $\eps$ is computed in the outer iteration \bcl using an approach that uses a mixed Newton/bisection method or the asymptotic square-root behavior with bisection as a resort. \ecl
There is a completely analogous algorithm for Problem 2 (distance to passivity), which is briefly addressed in Section~\ref{sec:alg2}.

In Section~\ref{sec:low-rank} we show how the inherent low-rank structure of the problem can be exploited for large sparse systems.
\bcl
The gradient system of Section~\ref{sec:E}, whose stationary points are of low rank, is restricted to matrices of the appropriate low rank. This restricted gradient system still has exactly the same stationary points as the original gradient system. Using the low-rank structure is beneficial both in the eigenvalue optimization and the eigenvalue computations.
\ecl

In Section~\ref{sec:num} we present a few illustrative numerical experiments.


\section{Optimizing the eigenvalue of smallest positive real part over perturbations of a fixed norm}
\label{sec:E}

\subsection{Recap: derivatives of simple eigenvalues}
We repeatedly use the following basic result.
\begin{lemma}\label{lem:basic} \cite{Kato1976}
Consider a differentiable matrix valued function $C(t)$ for $t$ in a neighborhood of $t_0\in\R$. Let $\lambda(t)$ be a continuous path of simple eigenvalues of $C(t)$ for $t$ near $t_0$. Let $x_0, y_0$ be the left and right eigenvectors, respectively, of $C(t_0)$ corresponding to $\lambda(t_0)$. Then $x_0^*y_0 \neq 0$ and $\lambda(t)$ is differentiable at~$t_0$ with
\begin{equation}
\dot{\lambda}(t_0) = \frac{x_0^* \dot{C}(t_0) y_0}{x_0^* y_0}\,.
\end{equation}
\end{lemma}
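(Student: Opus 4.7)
The approach I would take has three main steps. First, I would establish the non-orthogonality $x_0^* y_0 \neq 0$ using the simplicity of $\lambda(t_0)$. Since $\lambda(t_0)$ is simple, its algebraic and geometric multiplicities both equal one, so no generalized eigenvector of order two exists; equivalently, $y_0 \notin \mathrm{range}(C(t_0) - \lambda(t_0) I)$. By the Fredholm alternative, this range coincides with the orthogonal complement of $\mathrm{span}(x_0)$, so $y_0 \notin \mathrm{range}(C(t_0) - \lambda(t_0) I)$ is equivalent to $x_0^* y_0 \neq 0$.

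Second, to justify differentiability of $\lambda$ and to obtain a smooth accompanying right eigenvector, I would apply the implicit function theorem to the augmented system
\[
F(t, y, \mu) = \begin{pmatrix} (C(t) - \mu I)\, y \\ x_0^*(y - y_0) \end{pmatrix} = 0
\]
near $(t_0, y_0, \lambda(t_0))$. The Jacobian with respect to $(y, \mu)$ at the reference point is
\[
\begin{pmatrix} C(t_0) - \lambda(t_0) I & -y_0 \\ x_0^* & 0 \end{pmatrix},
\]
whose invertibility follows from $x_0^* y_0 \neq 0$ by a Schur-complement argument combined with the fact that the nullspace of $C(t_0) - \lambda(t_0) I$ is exactly $\mathrm{span}(y_0)$. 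This yields differentiable branches $y(t)$ and $\mu(t)$; continuity of the given path $\lambda(t)$ forces $\lambda(t) = \mu(t)$ locally, so $\lambda$ inherits differentiability at $t_0$.

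Third, I would differentiate the eigenvalue equation $C(t) y(t) = \lambda(t) y(t)$ at $t = t_0$ to obtain
\[
\dot{C}(t_0) y_0 + C(t_0)\, \dot{y}(t_0) = \dot{\lambda}(t_0)\, y_0 + \lambda(t_0)\, \dot{y}(t_0),
\]
multiply on the left by $x_0^*$, and invoke the left-eigenvector identity $x_0^* C(t_0) = \lambda(t_0)\, x_0^*$ to cancel the $\dot{y}(t_0)$ contributions. Dividing by $x_0^* y_0$, which is permissible by the first step, yields the claimed formula for $\dot{\lambda}(t_0)$.

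The main obstacle lies in the second step: the lemma only supplies continuity of $\lambda(t)$ as a path of simple eigenvalues, yet the derivative formula presupposes differentiability. Resolving this cleanly requires the implicit function theorem argument above (or, equivalently, an invocation of analytic perturbation theory as in Kato), together with the observation that the locally unique smooth eigenvalue branch produced must agree with the prescribed continuous path. Once this is in place, the calculation collapses to a one-line differentiation.
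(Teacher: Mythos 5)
The paper offers no proof of this lemma --- it is quoted as a classical fact from Kato's perturbation theory --- so there is no in-paper argument to compare against; your proposal is the standard textbook proof and it is correct. All three steps are sound: the Fredholm-alternative characterization showing that $x_0^*y_0=0$ would force $y_0\in\mathrm{range}(C(t_0)-\lambda(t_0)I)$ and hence a Jordan chain of length two, contradicting simplicity; the invertibility of the bordered Jacobian (a vector $(v,c)$ in its kernel satisfies $c\,x_0^*y_0=0$ after left-multiplication by $x_0^*$, hence $c=0$, hence $v\in\mathrm{span}(y_0)$, hence $v=0$); and the cancellation of the $\dot y(t_0)$ terms after left-multiplying the differentiated eigenvalue equation by $x_0^*$. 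The identification $\lambda(t)=\mu(t)$ also holds, since simplicity of $\lambda(t_0)$ guarantees that $C(t)$ has exactly one eigenvalue in a small disk around $\lambda(t_0)$ for $t$ near $t_0$, and both continuous paths must lie in that disk.

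One small refinement: the lemma assumes only that $C(t)$ is differentiable, whereas the classical implicit function theorem applied to your $t$-dependent system requires $C^1$ data. This is easily repaired by applying the implicit function theorem instead to $F(A,y,\mu)=\bigl((A-\mu I)y,\ x_0^*(y-y_0)\bigr)$ in the matrix variable $A$ --- a polynomial, hence $C^\infty$, map --- which produces smooth branches $y(A)$, $\mu(A)$ near $A=C(t_0)$; composing with the differentiable map $t\mapsto C(t)$ and invoking the chain rule then yields differentiability of $t\mapsto\mu(C(t))$ at $t_0$, and the rest of your argument goes through unchanged.
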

Since we have  $x_0^*y_0 \neq 0$, we will always apply the  normalization
\begin{equation}\label{eq:scalyx}
\| x_0 \|=1, \ \ \|y_0 \| =1, \ \ x_0^* y_0 \text{ is real and positive.}
\end{equation}

\subsection{Free gradient of the eigenvalue optimization problem}
Given $\eps>0$ and a  smooth path of matrices $E(t) \in \R^{k\times l}$ of unit Frobenius norm, at $X(t) = X + L[\eps E(t)]$ we  compute the change of $\phi_\eps(E(t))$ of \eqref{eig-opt} as $t$ varies. We use Lemma~\ref{lem:basic}  for the eigenvalue $\lambda(t)=  \lambda\bigl(M(X+L[\eps E(t)])\bigr)$ of minimal positive real part (with nonnegative imaginary part), which we assume to be simple, and denote by $x(t),y(t)$ the corresponding left and right eigenvectors in the above normalization and we set 
\begin{equation}\label{kappa}
\kappa(t) :=\frac 1{x(t)^* y(t)}>0.
\end{equation}
 We  obtain from Lemma~\ref{lem:basic}
\begin{align*}
\frac{d}{dt} \,\phi_\eps(E(t)) &= \frac {d}{dt} \,\Re \lambda\bigl(M(X+L[\eps E(t)])\bigr) 
\\
&= \kappa(t) \,\Re \Bigl(x(t)^* \frac{d}{dt} M(X+L[\eps E(t)]) y(t)\Bigr).
\end{align*}
We denote by $\langle U,V\rangle = \text{tr}(U^\top V)$ the Frobenius inner product on $\R^{(n+p)\times (n+m)}$ or $\R^{2n\times 2n}$ or $\R^{k\times l}$, as will be clear from the context. 
We then rewrite the expression above as
\begin{align*}
\frac{d}{dt} \phi_\eps(E(t)) &=  \kappa(t) \Bigl\langle M'(X+L[\eps E(t)])[L[\eps \dot E(t)]], \Re x(t)y^*(t)  \Bigr\rangle
\\
&=  \kappa(t) \Bigl\langle L[\eps \dot E(t)], M'(X+L[\eps E(t)])^\ast[\Re x(t)y^*(t)]  \Bigr\rangle
\\
&=  \kappa(t)\eps \Bigl\langle  \dot E(t),  L^\ast \bigl[ M'(X+L[\eps E(t)])^\ast[\Re x(t)y^*(t)]\bigr]  \Bigr\rangle,
\end{align*}
where $L^\ast:\R^{(n+p)\times(n+m)}\to\R^{k\times l}$ is the adjoint of the linear map $L:\R^{k\times l} \to \R^{(n+p)\times(n+m)}$, i.e.,
$$
\langle L[U],V \rangle = \langle U, L^\ast [V] \rangle \qquad\text{for all } \ U\in \R^{k\times l}, V \in \R^{(n+p)\times(n+m)},
$$
and  $M'(X)^\ast: \R^{2n\times 2n} \to \R^{(n+p)\times(n+m)}$ is the adjoint of the linear map $M'(X): \R^{(n+p)\times(n+m)} \to \R^{2n\times 2n}$. Summarizing this calculation, we have the following result.

\begin{lemma} \label{lem:gradient}
With the above notation, if $\lambda(t)$ is a simple eigenvalue, then
$$
\frac1{\eps \kappa(t)} \,\frac{d}{dt} \phi_\eps(E(t)) = \bigl\langle  G_\eps(E(t)),  \dot E(t) \bigr\rangle
$$
with the (rescaled) gradient
\begin{equation}
G_\eps(E)= L^\ast M'(X+L[\eps E])^\ast[\Re(x y^*)] \in \R^{k\times l}.
\label{eq:grad}
\end{equation}
Here, $x,y \in \C^{2n}$ are the left and right eigenvectors, respectively, corresponding to the simple eigenvalue $\lambda$ (of minimal positive real part) of the Hamiltonian matrix $M(X+L[\eps E])$. 
\end{lemma}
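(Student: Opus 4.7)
The plan is to follow the standard recipe for gradients of eigenvalue functionals: differentiate the eigenvalue using Lemma~\ref{lem:basic}, rewrite the resulting bilinear expression as a Frobenius inner product, and then transfer the derivative onto $\dot E(t)$ by invoking the adjoints of the two linear maps involved.

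First I would apply Lemma~\ref{lem:basic} to the matrix valued path $t\mapsto M(X+L[\eps E(t)])$. Since $\lambda(t)$ is simple, $x(t)^*y(t)\neq 0$ and we may normalize as in \eqref{eq:scalyx}, giving $\kappa(t)=1/(x(t)^*y(t))>0$. Differentiation yields $\dot\lambda(t)=\kappa(t)\,x(t)^*\frac{d}{dt}M(X+L[\eps E(t)])\,y(t)$. By the chain rule, $\frac{d}{dt}M(X+L[\eps E(t)])=M'(X+L[\eps E(t)])[L[\eps\dot E(t)]]$, because $L$ is linear in its argument. Taking real parts gives the first displayed line of the pre-lemma computation.

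Next I would convert the scalar $\Re(x^*Ay)$ to a Frobenius inner product. Since for any complex vectors $x,y$ and any real matrix $A$ one has $\Re(x^*Ay)=\langle A,\Re(xy^*)\rangle_F$ (using $\tr(U^\top V)$ as the real inner product and the fact that $M'(X+L[\eps E])$ maps real matrices to real matrices), this rewrites the eigenvalue derivative as $\kappa(t)\,\langle M'(X+L[\eps E(t)])[L[\eps\dot E(t)]],\Re(x(t)y(t)^*)\rangle$. Then I would apply the defining property of the adjoints twice in succession: first $M'(X+L[\eps E(t)])^*$ moves $\Re(xy^*)$ to the $L[\eps\dot E(t)]$-side, and then $L^*$ moves the inner argument down to $\eps\dot E(t)$ itself. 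Pulling the scalar factor $\eps$ out of the second slot yields precisely
$$
\tfrac{1}{\eps\kappa(t)}\,\tfrac{d}{dt}\phi_\eps(E(t)) = \bigl\langle L^*M'(X+L[\eps E(t)])^*[\Re(x(t)y(t)^*)],\,\dot E(t)\bigr\rangle,
$$
which is the claim with $G_\eps(E)$ as in \eqref{eq:grad}.

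This proof is essentially a careful bookkeeping exercise, and no step poses a serious obstacle; the only subtlety to verify is that $\Re(x^*Ay)=\langle A,\Re(xy^*)\rangle_F$ for real $A$, which amounts to noting that the imaginary part of $xy^*$ pairs to zero against any real matrix in the trace inner product. With that identity in hand the derivation is a direct chain-rule computation followed by two adjoint transpositions.
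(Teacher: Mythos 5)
Your proposal is correct and follows essentially the same route as the paper's derivation preceding the lemma: apply Lemma~\ref{lem:basic} with the chain rule, rewrite $\Re\bigl(x^*\dot M y\bigr)$ as the Frobenius pairing $\langle \dot M,\Re(xy^*)\rangle$, and transfer through the adjoints of $M'$ and $L$ while extracting the factor $\eps$. The identity $\Re(x^*Ay)=\langle A,\Re(xy^*)\rangle$ for real $A$ that you flag is indeed the only point needing verification, and it holds exactly as you argue.
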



%

\bcl
In the following two lemmas we give explicit expressions for $M'(X)^\ast [W]$ for a general matrix $W$ of the same dimension $2n\times 2n$ as the Hamiltonian matrix $M(X)$. 
We first give the result for $M=M_p$ of \eqref{M-p} and then for $M=M_b$ of \eqref{M-b}.
Here, $\sym(Z)=\tfrac12(Z+Z^\top)$ denotes the symmetric part of a matrix $Z$.

\begin{lemma}
\label{lemma:formulaoperator-p}
For $W=\begin{pmatrix}
W_{11} & W_{12} \\ W_{21} & W_{22} 
\end{pmatrix}\in \R^{2n\times 2n}$ partitioned according to the $n\times n$ blocks of~$M_p(X)$, we have $M_p'(X)^\ast[W] = V= \begin{pmatrix}
V_A & V_B \\ V_C & V_D
\end{pmatrix}\in \R^{(n+m)\times(n+m)}$ partitioned according to the blocks of the system matrix~$X=\begin{pmatrix}
A & B \\ C & D
\end{pmatrix}$, where (with $T=D+D^\top$ for short)
\begin{equation}
\label{eq:Ei-p}
\begin{aligned}
V_A &= W_{11} - W_{22}^\top  \\
V_B &=-(W_{11} - W_{22}^\top) C^\top  T^{-1} - 2 \sym(W_{12}) B T^{-1} \\
V_C &= -T^{-1} B^\top (W_{11} - W_{22}^\top) +  T^{-1} C\,2\sym(W_{21}) \\
V_D&= 2\,\sym\left(T^{-1} (B^\top,-C) \,W \begin{pmatrix} C^\top \\ B \end{pmatrix} T^{-1} \right).
\end{aligned}
\end{equation}
\end{lemma}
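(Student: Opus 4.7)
The plan is to compute the Fréchet derivative $M_p'(X)[\Delta X]$ explicitly in block form and then read off the adjoint by rearranging the Frobenius inner product $\langle M_p'(X)[\Delta X], W\rangle$ using the cyclic property of the trace. Writing $T = D + D^\top$ for brevity, the matrix $M_p(X)$ of \eqref{M-p} depends on the data through three ingredients: the block-diagonal part $\mathrm{diag}(A,-A^\top)$, the outer factors $\begin{pmatrix} B \\ -C^\top\end{pmatrix}$ and $(C,B^\top)$, and the symmetric inner factor $T^{-1}$. Since $T$ is symmetric we have $T^{-\top}=T^{-1}$, and since $D$ enters $M_p(X)$ only through $T$, its variation is $\Delta T = 2\sym(\Delta D)$ and correspondingly the variation of $T^{-1}$ is $-2\, T^{-1}\sym(\Delta D)\,T^{-1}$. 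The derivative $M_p'(X)[\Delta X]$ therefore splits naturally into four contributions, one driven by each of $\Delta A,\Delta B,\Delta C,\Delta D$.

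Next I pair $M_p'(X)[\Delta X]$ with the test matrix $W$ partitioned into its four $n\times n$ blocks $W_{ij}$, and collect terms according to which block of $\Delta X$ they multiply. Repeated application of the identities $\langle P\Delta Q, R\rangle = \langle \Delta, P^\top R Q^\top\rangle$ and $\langle \Delta P, R\rangle = \langle \Delta, R P^\top\rangle$ moves one of $\Delta A,\Delta B,\Delta C,\Delta D$ outside, so that the remaining factor is identified with $V_A$, $V_B$, $V_C$, or $V_D$. The formula for $V_A$ drops out directly from the block-diagonal contributions $\langle \Delta A, W_{11}\rangle - \langle \Delta A, W_{22}^\top\rangle$. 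For each of $V_B$ and $V_C$, two kinds of terms contribute: one in which $\Delta B$ (respectively $\Delta C$) appears directly from one outer factor, and one in which its transpose appears from the other outer factor; these transposed pairs combine to produce the symmetrized factors $2\sym(W_{12})$ and $2\sym(W_{21})$, together with the factor $W_{11}-W_{22}^\top$ inherited from the block-diagonal contribution.

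The $\Delta D$ coefficient requires one additional algebraic step. All four terms originating from the variation of $T^{-1}$ can be assembled into a single Frobenius pairing $\langle \Delta T, Z\rangle$ with $Z = T^{-1}(B^\top,-C)\,W\,\begin{pmatrix}C^\top\\B\end{pmatrix}T^{-1}$, as seen by expanding the $2\times 2$ block product. The identity $\langle 2\sym(\Delta D), Z\rangle = \langle \Delta D, 2\sym(Z)\rangle$, which follows from the self-adjointness of the operator $\sym$, then produces the claimed symmetric expression for $V_D$. The main obstacle is merely careful bookkeeping of signs, transposes, and block dimensions in the $2\times 2$ structure; there is no conceptual difficulty once one exploits the symmetry $T = T^\top$ and assembles the four $\Delta T$-contributions into a single block product.
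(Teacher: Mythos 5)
Your proposal is correct and follows essentially the same route as the paper: differentiate $M_p$ along a path, pair the result with $W$ in the Frobenius inner product, and rearrange via trace identities to read off the four blocks of the adjoint, with the observation $\Delta T = 2\,\sym(\Delta D)$ and the self-adjointness of $\sym$ handling the $V_D$ block exactly as in the paper. One small bookkeeping slip: the factor $W_{11}-W_{22}^\top$ appearing in $V_B$ and $V_C$ is not inherited from the block-diagonal ($\Delta A$) contribution, which pairs only with $\Delta A$, but arises from combining the $W_{11}$ and $W_{22}^\top$ pieces of the two outer-factor contributions; this does not affect the validity of the argument.
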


\begin{proof} For any path $X(t)=\begin{pmatrix}
A(t) & B(t) \\ C(t) & D(t)
\end{pmatrix}$ we have $\dot{M_p} = \frac d{dt} M_p(X) = M_p'(X)[\dot{X}]$ and hence
$$
\langle \dot{X},  M_p'(X)^\ast [W] \rangle = \langle M_p'(X) [\dot{X}], W \rangle = \langle \dot{M_p}, W \rangle.
$$
Differentiation of \eqref{M-p} yields
\begin{align*}
\dot M_p  =&\ \begin{pmatrix}
\dot A & 0\\ 0 & -\dot A^\top
\end{pmatrix}
-
 \begin{pmatrix}
\dot B \\-\dot C^\top
\end{pmatrix}
T^{-1}
 \begin{pmatrix}
C^\top \\ B
\end{pmatrix}^\top
\\
&\
+ 
 \begin{pmatrix}
B \\-C^\top
\end{pmatrix}
T^{-1} \dot T  T^{-1}
 \begin{pmatrix}
C^\top \\ B
\end{pmatrix}^\top
-
 \begin{pmatrix}
B \\-C^\top
\end{pmatrix}
T^{-1}
 \begin{pmatrix}
\dot C^\top \\ \dot B
\end{pmatrix}^\top.
\end{align*}
We note that
\begin{align*}
&\left\langle  \begin{pmatrix}
\dot A & 0\\ 0 & -\dot A^\top
\end{pmatrix} , W \right\rangle= \langle \dot A, W_{11}-W_{22}^\top \rangle
\end{align*}
\vspace{-4mm}
\begin{align*}
&\left\langle
\begin{pmatrix}
\dot B \\-\dot C^\top
\end{pmatrix}
T^{-1}
 \begin{pmatrix}
C^\top \\ B
\end{pmatrix}^\top , W  \right\rangle=  \left\langle
 \begin{pmatrix}
\dot B \\-\dot C^\top 
\end{pmatrix} , W  \begin{pmatrix}
C^\top \\ B
\end{pmatrix} T^{-1} \right\rangle 
\\
&\qquad = \langle  \dot B, (W_{11} C^\top + W_{12} B) T^{-1} \rangle -
\langle \dot C,  ( (W_{21} C^\top + W_{22} B) T^{-1})^\top \rangle
\end{align*}
\vspace{-4mm}
\begin{align*}
& \left\langle
\begin{pmatrix}
B \\-C^\top
\end{pmatrix}
T^{-1} \dot T  T^{-1}
 \begin{pmatrix}
C^\top \\ B
\end{pmatrix}^\top , W \right\rangle = 2 \left\langle \dot D, \sym\left(T^{-1} (B^\top,-C) \,W \begin{pmatrix} C^\top \\ B \end{pmatrix} T^{-1} \right) \right\rangle
\end{align*}
\vspace{-4mm}
\begin{align*}
&  \left\langle
\begin{pmatrix}
B \\-C^\top
\end{pmatrix}
T^{-1}
 \begin{pmatrix}
\dot C^\top \\ \dot B
\end{pmatrix}^\top, W \right\rangle = \langle (\dot C, \dot B^\top), T^{-1} (B^\top,-C)W \rangle
\\
&\qquad = \langle \dot C, T^{-1} (B^\top W_{11} - C W_{21}) \rangle +
\langle \dot B, (T^{-1} (B^\top W_{12} - C W_{22})^\top \rangle
\end{align*}
so that finally we obtain, with $V$ as defined in the lemma, 
$$ 
\langle \dot X, M_p'(X)^\ast [W] \rangle =
\langle \dot{M_p}, W \rangle  = \langle\dot A,V_A \rangle + \langle\dot B,V_B \rangle + \langle\dot C,V_C \rangle +
\langle\dot D,V_D \rangle = \langle \dot X, V \rangle
$$
for all $\dot X=\begin{pmatrix}
\dot A & \dot B \\ \dot C & \dot D
\end{pmatrix}\in \R^{(n+m)\times(n+m)}$. This yields $M_p'(X)^\ast [W]=V$, which is the stated result.
\end{proof}

\medskip
By a similar calculation we also obtain the following result.
\ecl

\begin{lemma}
\label{lemma:formulaoperator-b}
For $W=\begin{pmatrix}
W_{11} & W_{12} \\ W_{21} & W_{22} 
\end{pmatrix}\in \R^{2n\times 2n}$ partitioned according to the $n\times n$ blocks of~$M_b(X)$, we have $M_b'(X)^\ast[W] = \begin{pmatrix}
V_A & V_B \\ V_C & V_D
\end{pmatrix}\in \R^{(n+p)\times(n+m)}$ partitioned according to the blocks of the system matrix~$X=\begin{pmatrix}
A & B \\ C & D
\end{pmatrix}$, where (with $R=D^\top D - I$ and $S=DD^\top-I$)
\begin{equation}
\label{eq:Ei-b}
\begin{aligned}
V_A &= W_{11} - W_{22}^\top  \\
V_B &=-(W_{11} - W_{22}^\top) C^\top D R^{-1} - 2 \sym(W_{12}) B R^{-1} \\
V_C &=- D R^{-1} B^\top (W_{11} - W_{22}^\top) + 2 S^{-1} C\sym(W_{21}) \\
V_D&=-C (W_{11} - W_{22}^\top)^\top B R^{-1} + 2D \sym(R^{-1} B^\top (W_{11} - W_{22}^\top) C^\top D R^{-1})  \\
  &+ 2DR^{-1}B^\top \sym (W_{12}) B R^{-1} - 2 S^{-1} C \sym (W_{21}) C^\top S^{-1} D.
\end{aligned}
\end{equation}
\end{lemma}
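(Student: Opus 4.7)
The plan is to mirror the derivation of the preceding Lemma~\ref{lemma:formulaoperator-p}. I use the adjoint identity $\langle\dot X,\,M_b'(X)^\ast[W]\rangle = \langle M_b'(X)[\dot X],W\rangle = \langle\dot M_b,W\rangle$ along a smooth path $X(t)=\begin{pmatrix}A(t)&B(t)\\C(t)&D(t)\end{pmatrix}$, differentiate $M_b$ from \eqref{M-b}, pair the result with $W$ in the Frobenius inner product, use the cyclic property of the trace to rearrange each summand into a pairing against one of $\dot A,\dot B,\dot C,\dot D$, and read off $V_A,V_B,V_C,V_D$ as the corresponding coefficients.

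It is convenient to split $M_b(X)$ into three pieces: (i) the block diagonal $\mathrm{diag}(A,-A^\top)$, which is identical to its $M_p$ counterpart and contributes $W_{11}-W_{22}^\top$ to $V_A$; (ii) the off-diagonal block $\begin{pmatrix}0&0\\-C^\top C&0\end{pmatrix}$, whose derivative $-\dot C^\top C - C^\top\dot C$ paired against $W_{21}$ yields, after cyclic-trace manipulations exactly as in the $M_p$ proof, a contribution $-2\,C\,\sym(W_{21})$ to $V_C$; and (iii) the rank-$m$ correction $\begin{pmatrix}B\\-C^\top D\end{pmatrix}(I{-}D^\top D)^{-1}\begin{pmatrix}C^\top D\\B\end{pmatrix}^\top$, whose derivative supplies contributions from $\dot B$, $\dot C$ and $\dot D$. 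The $\dot D$ contributions arrive through three channels: the two outer vectors (via $-C^\top\dot D$ and its transpose partner) and the middle factor via $\tfrac{d}{dt}(I-D^\top D)^{-1}=-R^{-1}\dot R\,R^{-1}$ with $\dot R=\dot D^\top D + D^\top\dot D$ and $(I-D^\top D)^{-1}=-R^{-1}$.

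Collecting terms, the $\dot A$ and $\dot B$ coefficients follow the $M_p$ pattern with $(D+D^\top)^{-1}$ replaced by $R^{-1}$ and with an extra factor of $D$ inherited from the outer $-C^\top D$ and $C^\top D$ blocks, giving $V_A$ and $V_B$ as stated. The $\dot C$ and $\dot D$ coefficients need more care, since contributions come from the $-C^\top C$ block, from the outer factors of the rank correction, and (for $V_D$) from the middle-factor variation. To reach the stated symmetric form I use the commutation identity
\[
D R^{-1} = S^{-1} D,\qquad R^{-1} D^\top = D^\top S^{-1},
\]
immediate from $D(D^\top D - I) = (DD^\top - I)D$, which in particular combines the $-2\,C\,\sym(W_{21})$ piece coming from (ii) with a matching contribution from the outer $C^\top D$ factor into the single term $2\,S^{-1}C\,\sym(W_{21})$ visible in $V_C$, and produces the analogous $DR^{-1}$-structure and $S^{-1}$-terms in $V_D$.

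The main obstacle is therefore not conceptual but bookkeeping: one must combine four or five separate contributions to each of $V_C$ and $V_D$ with careful sign tracking — owing to $(I-D^\top D)^{-1}=-R^{-1}$ and to the sign in $-C^\top D$ — and apply the $R\leftrightarrow S$ commutation repeatedly. After these simplifications, matching the coefficients of $\dot A,\dot B,\dot C,\dot D$ against $\langle\dot X,V\rangle$ yields precisely \eqref{eq:Ei-b}.
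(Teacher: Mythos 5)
Your strategy is exactly what the paper intends: it gives no explicit proof of this lemma, stating only that it follows ``by a similar calculation'' to Lemma~\ref{lemma:formulaoperator-p}, and your decomposition into the block-diagonal part, the $-C^\top C$ block and the rank-$m$ correction, together with the commutation identity $DR^{-1}=S^{-1}D$ (and its consequence $I-DR^{-1}D^\top=-S^{-1}$, which is what actually merges the two $\sym(W_{21})$ contributions into $2S^{-1}C\sym(W_{21})$), is the right way to carry that calculation out. There is, however, one sign error that would derail the bookkeeping if used literally: writing $M=I-D^\top D=-R$, one has
\begin{equation*}
\frac{d}{dt}(I-D^\top D)^{-1}=-M^{-1}\dot M M^{-1}=-(-R^{-1})(-\dot R)(-R^{-1})=+R^{-1}\dot R\,R^{-1},
\end{equation*}
not $-R^{-1}\dot R\,R^{-1}$ as you state (check the scalar case $\frac{d}{dt}(1-d^2)^{-1}=2d\dot d/(1-d^2)^2>0$ for $\dot d\,d>0$). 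With your sign, the middle-factor contribution to $V_D$ comes out as $-2DR^{-1}\sym(u^\top W v)R^{-1}$ instead of $+2DR^{-1}\sym(u^\top W v)R^{-1}$ (with $u^\top=(B^\top,-D^\top C)$, $v^\top=(D^\top C, B^\top)$), which flips the signs of the $2D\sym(R^{-1}B^\top(W_{11}-W_{22}^\top)C^\top D R^{-1})$ and $2DR^{-1}B^\top\sym(W_{12})BR^{-1}$ terms and corrupts the last term, so \eqref{eq:Ei-b} would not be reproduced. With the sign corrected, the rest of your outline checks out against the stated formulas.
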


We further note that for the particular choice \eqref{L-example} of $L$, we have 
\begin{equation}\label{L-star}
L^\ast[V]=V_C\,Q^{-1} \quad\text{ for } \quad V=\begin{pmatrix}
V_A & V_B \\ V_C & V_D
\end{pmatrix}
\in \R^{(n+p)\times (n+m)}
\end{equation}
partitioned according to the blocks of~$X$.

\bcl
If $L[\Delta Z]$ represents a sparse matrix whose nonzero entries are collected in a vector $\Delta Z$, then $L^\ast[V]$ is simply the vector that retains those entries of $V$ that correspond to the given sparsity pattern.
\ecl

\subsection{Norm-constrained gradient flow}
A differential equation for $E(t)$ of unit Frobenius norm along which $\phi_\eps(E(t))$ increases (as desired for Problem~1: passivation) or decreases (as desired for Problem~2: computing the distance to the nearest non-passive system), is given by a {\it gradient flow constrained to the unit sphere} $\| E \|_F =1$ in the direction of steepest ascent (for Problem 1) or steepest descent (for Problem 2) with the $+$ or $-$ sign, respectively:
\begin{equation}
\label{ode-E}
\dot{E} = \pm \bigl( G_\eps(E)  - \mu E \bigr),
\end{equation}
where the Lagrange multiplier $\mu = \langle G_\eps(E), E \rangle$ is chosen to ensure that $\frac {d}{dt} \| E \|_F^2 = 2\langle E, \dot{E} \rangle $ is identically zero when  $\| E \|_F=1$.


This differential equation for $E$ is solved numerically into a stationary point. We note the following properties (cf.~\cite[Theorem~3.1]{Guglielmi2017}).

\begin{theorem}\label{thm:stat}
Along solutions $E(t)$ of  {\rm (\ref{ode-E})} of unit Frobenius norm for which the eigenvalue $\lambda(t)$ of $M(X+L[\eps E(t)])$ 
with smallest positive real part is simple \bcl for almost all $t$, we have at these $t$\ecl
$$
\pm \frac{d}{dt} \phi_\eps\bigl(E(t)\bigr) \ge 0,
$$
and the following statements are equivalent if $G_\eps(E)\ne 0$:
\begin{enumerate}
\item $\frac{d}{dt} \phi_\eps\bigl(E(t)\bigr) = 0$.
\item $\dot E = 0$.
\item $E$ is a real multiple of $G_\eps(E)$.
\end{enumerate}
\end{theorem}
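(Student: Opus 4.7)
The plan is to reduce everything to a direct substitution using Lemma~\ref{lem:gradient} and then apply the Cauchy--Schwarz inequality on the Frobenius inner product.

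First I would write, at every $t$ where the eigenvalue is simple, the identity of Lemma~\ref{lem:gradient}
\[
\tfrac{1}{\eps\kappa(t)} \tfrac{d}{dt} \phi_\eps(E(t)) = \langle G_\eps(E(t)), \dot E(t)\rangle,
\]
and substitute the right-hand side of the ODE (\ref{ode-E}). Writing $G = G_\eps(E)$ and $\mu = \langle G,E\rangle$ for brevity, and using $\|E\|_F=1$, this yields
\[
\tfrac{1}{\eps\kappa(t)} \tfrac{d}{dt}\phi_\eps(E(t)) = \pm\bigl(\|G\|_F^2 - \mu^2\bigr).
\]
Since $\eps>0$ and $\kappa(t)>0$ by definition \eqref{kappa}, the monotonicity claim $\pm \frac{d}{dt}\phi_\eps(E(t))\ge 0$ reduces to the Cauchy--Schwarz inequality $\langle G,E\rangle^2 \le \|G\|_F^2 \|E\|_F^2 = \|G\|_F^2$.

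For the equivalences, I would chain $(1)\Rightarrow(3)\Rightarrow(2)\Rightarrow(1)$. From the previous display, $(1)$ means $\|G\|_F^2 = \mu^2$, i.e.\ equality in Cauchy--Schwarz, which (together with $\|E\|_F = 1$) forces $G$ and $E$ to be linearly dependent. Since $G\ne 0$ by assumption, $E$ must be a nonzero real multiple of $G$, which is $(3)$. For $(3)\Rightarrow(2)$, write $E=\alpha G$; then $\mu = \alpha \|G\|_F^2$, so
\[
\dot E = \pm(G - \mu E) = \pm G\,(1 - \alpha^2 \|G\|_F^2),
\]
and $\|E\|_F=1$ gives $\alpha^2\|G\|_F^2=1$, so $\dot E=0$. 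Finally $(2)\Rightarrow(1)$ is immediate from the displayed identity for $\frac{d}{dt}\phi_\eps(E(t))$.

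The only subtle point is the qualifier \emph{for almost all $t$}: at isolated times where $\lambda$ fails to be simple, Lemma~\ref{lem:gradient} does not apply, but the derivative identity and the sign statement hold on the full-measure complement, which is all that is needed to make sense of the conclusion. No deeper obstacle arises, as the theorem is essentially the standard characterization of stationary points of a projected gradient flow on the unit Frobenius sphere, specialized to the gradient of $\phi_\eps$ identified in Lemma~\ref{lem:gradient}.
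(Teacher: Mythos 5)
Your proof is correct and follows essentially the same route as the paper's: substitute the ODE into the derivative identity of Lemma~\ref{lem:gradient}, obtain $\pm\frac{d}{dt}\phi_\eps = \eps\kappa\,(\|G\|_F^2-\langle G,E\rangle^2)\ge 0$ by Cauchy--Schwarz, and use the equality case (together with $G\ne 0$) for the equivalences. The only cosmetic difference is the direction in which you chain the three implications.
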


\begin{proof} The short proof is essentially the same as that of Theorem~3.1 in~\cite{Guglielmi2017}. We include it for the convenience of the reader.
Let $G=G_\eps(E)$ for short. Since $\mu=\langle G,E \rangle$ and $\|E\|_F=1$,
{\it 3.} implies {\it 2.}, and clearly {\it 2.} implies {\it 1.} So it remains to show that {\it 1.} implies {\it 3.} We note that 
\begin{align*}
&\pm \frac1{\eps \kappa(t)} \,\frac{d}{dt} \phi_\eps\bigl(E(t)\bigr) =\pm \langle G,\dot E \rangle =\langle G, G-\mu E \rangle 
= \|G\|_F^2 - \langle G,E \rangle^2 \ge 0.
\end{align*}
The last inequality holds by the Cauchy-Schwarz inequality and $\|E\|_F=1$. This inequality is strict unless $G$ is a real multiple of $E$. Finally, $G$ is nonzero by assumption. Hence, {\it 1.} implies {\it 3.}
\end{proof}

\begin{remark} Along a trajectory $E(t)$, the eigenvalue with smallest positive real part $\lambda(t)=\lambda(M(X+L[\eps E(t)]))$ may become discontinuous (because a different branch of eigenvalues gets to have smallest positive real part) or become a multiple eigenvalue at some instance $t$. In the case of a discontinuity, the differential equation is further solved, with an ascent/descent of the eigenvalue with smallest positive real part until finally a stationary point is approximately reached. \bcl Eigenvalues of  Hamiltonian matrices coalesce with their mirrored eigenvalue when they approach the real or imaginary axis but along a path of such eigenvalues, which does not become stationary on a point of coalescence, the eigenvalues will typically split again immediately after the coalescence.  Even if some continuous trajectory runs into a coalescence, \ecl this is highly unlikely to happen after discretization of the differential equation, 
and so the computation will not be affected.
\end{remark}

\newcommand{\Ram}{{\rm R}}
\newcommand{\xA}{x_A}
\newcommand{\xD}{x_D}
\newcommand{\yA}{y_A}
\newcommand{\yD}{y_D}
\bng

\subsection{Constraints on the perturbed state space matrix and the perturbed feedthrough matrix}

If the state space matrix $A$ is allowed to be perturbed (which is not the case with \eqref{L-example}), then we must additionally 
ensure that all eigenvalues of the perturbed matrix still have negative real part, say smaller than some given 
threshold $-\vartheta_A < 0$. 

Moreover, if the feedthrough matrix $D$ is perturbed, depending on the framework (positive realness or bounded realness) 
we have to impose on the perturbed matrix $\widehat D$ that either ${\rm sym}(\widehat D)=\tfrac12(\widehat D + \widehat D^\top)$ or
$I - \widehat D^\top \widehat D$ is positive definite, say with smallest eigenvalue greater than $\vartheta_D>0$. In the following we consider only the first case (positive realness). The case of bounded realness is treated analogously.

We write for $L$ of \eqref{L}
\begin{equation*}
L[\Delta Z] = \left( \begin{array}{cc} L_A[\Delta Z] & L_B[\Delta Z] \\ L_C[\Delta Z] & L_D[\Delta Z] \end{array} \right) 
\end{equation*}
and again decompose $\Delta Z=\eps E$ with $\eps>0$ and $E$ of Frobenius norm 1.
%
%
%

%
%
%
%

We define the penalization functions
\begin{eqnarray*}
\phi_{A,\eps} \left(E \right) & = & \frac12 \Bigl(  \vartheta_A + \Re \lambda_{\rm max} \bigl(A + L_A[\eps E] \bigr) \Bigr)_+^{\ 2}
\\
\phi_{D,\eps} \left(E \right) & = & \frac12 \Bigl(  \vartheta_D - \lambda_{\rm min}\bigl({\rm sym}\left(D + L_D[\eps E] \right) \bigr) \Bigr)_+^{\ 2},
\end{eqnarray*}
where $(x)_+=\max(x,0)$ and $\Re\lambda_{\rm max}(\cdot)$ is the maximal real part of an eigenvalue of the indicated matrix (the spectral abscissa), and $\lambda_{\rm min}(\cdot)$ is the minimal eigenvalue of a symmetric matrix.
In the same way as Lemma~\ref{lem:gradient} we obtain the following result.

\begin{lemma} \label{lem:gradP}
For a path $E(t)$, let $\lambda_{\rm max}(t)$ be a simple eigenvalue of $A + \eps L_A[E(t)]$. Then,
\[
\frac1{\eps \kappa_A(t)} \,\frac{d}{dt} \phi_{A,\eps} \left( L[E(t)] \right) = \bigl\langle  G_{A,\eps}(E(t)),  \dot E(t) \bigr\rangle
\]
with the (rescaled) gradient
\begin{equation}
G_{A,\eps}(E) = L_A^\ast [\Re(\xA \yA^*)] \Bigl(  \vartheta_A + \Re \lambda_{\rm max} \bigl(A + \eps L_A[E] \bigr) \Bigr)_+ \in \R^{k\times l},
\label{eq:gradPA}
\end{equation}
where $\xA, \yA$ are normalized left and right eigenvectors  to $\lambda_{\rm max}(A + \eps L_A[\eps E]))$, and $\kappa_A=1/(\xA^* \yA)>0$.

Similarly, if $\lambda_{\rm min}(t)$ is a simple eigenvalue of ${\rm sym}\left( D + \eps L_D[E(t)] \right)$, then
\[
\frac1{\eps} \,\frac{d}{dt} \phi_{D,\eps} \left( L[E(t)] \right) = \bigl\langle  G_{D,\eps}(E(t)),  \dot E(t) \bigr\rangle
\]
with the (rescaled) gradient
\begin{equation}
G_{D,\eps}(E) = L_D^\ast [\yD \yD^\top] \Bigl( \vartheta_D - \lambda_{\rm min}\bigl({\rm sym}(D + L_D[\eps E])  \bigr) \Bigr)_+ \in \R^{k\times l},
\label{eq:gradPD}
\end{equation}
where $ \yD$ is the normalized real right eigenvector  to $\lambda_{\rm min}\left({\rm sym} (D + L_D[\eps E]) ) \right)$.
\end{lemma}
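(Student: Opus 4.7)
My plan is to mirror the proof of Lemma~\ref{lem:gradient} essentially verbatim, the only new ingredient being the outer chain-rule factor coming from $\tfrac12((\cdot)_+)^2$, which is continuously differentiable on~$\mathbb{R}$ with derivative $(\cdot)_+$. I would differentiate each penalization function by the chain rule, apply Lemma~\ref{lem:basic} (Kato's formula) to the inner eigenvalue, and finally rewrite the result in the form $\langle G, \dot E\rangle$ using the adjoint $L_A^\ast$ or $L_D^\ast$. The statement assumes simplicity of the relevant eigenvalue, which together with non-degeneracy of the argument of $(\cdot)_+$ gives classical differentiability of the composite on an open set of full measure in~$t$, so no extra care beyond the standing simplicity hypothesis is needed.

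For the spectral abscissa of $A+\eps L_A[E(t)]$ I would proceed exactly as in Lemma~\ref{lem:gradient}: Kato gives $\dot\lambda_{\max}(t)= \eps\kappa_A\, x_A^\ast L_A[\dot E(t)]\,y_A$; taking real parts and using the identity $\Re(x_A^\ast C y_A) = \langle \Re(x_A y_A^\ast),C\rangle$ converts this to $\eps\kappa_A \langle L_A^\ast[\Re(x_A y_A^\ast)],\dot E(t)\rangle$; multiplying by the outer chain-rule factor $(\vartheta_A+\Re\lambda_{\max})_+$ then yields the claimed expression for $G_{A,\eps}$.

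For $\phi_{D,\eps}$ the simplification I would exploit is that the inner matrix $\sym(D + \eps L_D[E(t)])$ is already symmetric, so $\lambda_{\min}(t)$ is real, the left and right eigenvectors to $\lambda_{\min}(t)$ coincide in a single real unit vector $y_D$, and Kato's formula collapses to $\dot\lambda_{\min}(t) = \eps\, y_D^\top \sym(L_D[\dot E(t)])\,y_D$. Since $y_D y_D^\top$ is symmetric, the symmetrization can be dropped under the Frobenius pairing, giving $\eps\,\langle L_D^\ast[y_D y_D^\top], \dot E(t)\rangle$; combining with the outer chain-rule factor $(\vartheta_D - \lambda_{\min})_+$ (and carrying the sign induced by the minus in front of $\lambda_{\min}$) then produces the second formula.

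No step is genuinely hard; the only places where one must pay attention are the sign bookkeeping in the $\phi_{D,\eps}$ case and the verification that the adjoint identity $\langle \sym Z, y_Dy_D^\top\rangle = \langle Z, y_Dy_D^\top\rangle$ lets one commute $L_D^\ast$ past the symmetrization. Everything else is a direct transcription of the argument already carried out for Lemma~\ref{lem:gradient}, applied twice.
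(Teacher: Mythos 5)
Your proposal is correct and coincides with the paper's own (implicit) proof: the paper derives this lemma only by the remark ``in the same way as Lemma~\ref{lem:gradient}'', i.e.\ the chain rule through the $C^1$ function $\tfrac12(\cdot)_+^{\,2}$ combined with Kato's derivative formula, the identity $\Re(x^*Cy)=\langle \Re(xy^*),C\rangle$, and the adjoints $L_A^\ast$, $L_D^\ast$, exactly as you outline (including the collapse to a single real eigenvector in the symmetric $D$ case). The one point to double-check is the sign you mention in passing: carrying the minus from $\vartheta_D-\lambda_{\min}$ consistently yields $G_{D,\eps}(E)=-L_D^\ast[\yD\yD^\top]\bigl(\vartheta_D-\lambda_{\min}\bigr)_+$, so the formula \eqref{eq:gradPD} as printed appears to be off by a sign rather than being reproduced verbatim by your (correct) computation.
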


%

\medskip
In the {\it penalization approach}, we replace the functional $\phi_\eps(E)$, which is to be maximized (+) or minimized (-), by the penalized functional to be maximized, 
$$\psi_{\eps,c} (E) = \pm\phi_\eps(E) - c_A \phi_{A,\eps}(E) - c_D \phi_{D,\eps}(E)
$$ 
with  large parameters ${c_A,c_D\gg 1}$.
The differential equation \eqref{ode-E} is then replaced by 
\begin{equation}
\label{ode-Ec}
\dot{E} =  \pm G_\eps(E) - c_A G_{A,\eps}(E) - c_D G_{D,\eps}(E) - \mu E,
\end{equation}
where the Lagrange multiplier $\mu = \langle \pm G_\eps(E) - c_A G_{A,\eps}(E) - c_D G_{D,\eps}(E), E \rangle$ is chosen to ensure that 
$\| E \|_F$ is conserved along solution trajectories.

Alternatively, we can use the {\it constrained gradient system}
\begin{equation}
\label{ode-E-con}
\dot{E} = \pm G_\eps(E)  - \mu_A G_{A,\eps}(E) - \mu_D G_{D,\eps}(E) - \mu E,
\end{equation}
where the Lagrange multipliers $\mu,\mu_A,\mu_D$ are chosen such that the constraints
\begin{equation}
\langle E, \dot E \rangle =0,\quad\ 
\langle G_{A,\eps}(E), \dot E \rangle \le 0,\quad\ 
\langle G_{D,\eps}(E), \dot E \rangle \le 0
\end{equation}
are satisfied. Here we note that the latter two constraints become inactive when $\Re \lambda_{\rm max} \bigl(A + \eps L_A[E] \bigr)\le -\vartheta_A$
and $\lambda_{\rm min}\bigl({\rm sym} (D + L_D[\eps E])\bigr)\ge \vartheta_D$.
\eng

\section{The equation for the perturbation size $\eps$ for Problem~1 (passivation)}
\label{sec:eps}
We consider Problem~1, which corresponds to the $+$ sign above.

In order to present the outer iteration for optimizing the perturbation size $\eps$, we need to study the behavior of a pair of non-imaginary eigenvalues close to coalescence on the imaginary axis.

Let $E(\eps)$ of unit Frobenius norm be a local maximizer of the optimization problem \eqref{eig-opt}. We let $\lambda(\eps)$  denote the eigenvalue of smallest positive real part \bcl (among these we choose the one with largest imaginary part) \ecl  of $M(X+L[\eps E(\eps)])$ and $x(\eps)$ and $y(\eps)$ are the corresponding left and right eigenvectors, normalized according to \eqref{eq:scalyx}. 
We let $\oeps$ denote the smallest value of $\eps$ such that 
$$
f(\eps):= \phi_\eps(\eps E(\eps))= \Re \lambda(\eps)
$$
becomes zero and is nonzero to the right of $\eps$:
$$
f(\eps)=0 \quad\ \text{for } 0 < \eps \le \oeps \quad\ \text{ and } \quad f(\eps)>0 \quad\text{for $\eps>\oeps$ near $\oeps$.}
$$
For a given small threshold $\delta>0$, we denote by $\eps_\delta>\oeps$ the smallest value of $\eps$ such that $f(\eps)$ equals $\delta\,$:
$$
f(\eps_\delta)=\delta.
$$
To determine $\eps_\delta$, we are thus left with a one-dimensional root-finding problem. 
\bcl
This can be solved by a variety of methods, such as bisection. We aim for a locally quadratically convergent Newton-type method, which can be justified under additional assumptions that appear to be usually satisfied. If these assumptions are not met, we can always go back to bisection. The algorithm proposed in the next section actually uses a combined Newton-type/bisection approach.
\ecl

In the following we show that under additional assumptions, 
the function $f$ is differentiable to the right of $\oeps$ and has a square-root behavior $f(\eps)\sim \sqrt{\eps-\oeps}$ as $\eps \searrow \oeps$.

\begin{assumption}
For $\eps$ in a right neighborhood  of $\oeps$, i.e., $\eps\in(\oeps, \bar\eps)$ for some ${\bar\eps>\oeps}$, 
we assume that the eigenvalue $\lambda(\eps)$  of smallest positive real part with nonnegative imaginary part of the Hamiltonian matrix $M(\eps):=M(X+L[\eps E(\eps)])$  
is unique and is a \emph{simple} eigenvalue. Moreover,  $E(\eps)$ and $\lambda(\eps)$  are assumed to be smooth functions of~$\eps$.
\label{assumpt}
\end{assumption}

Under Assumption \ref{assumpt}, also the associated normalized eigenvectors $x(\eps), y(\eps)$ with $x(\eps)^*y(\eps)>0$ are smooth functions of $\eps$ for $\eps\in(\oeps, \bar\eps)$.

\bng
%
%
We remark that Assumption \ref{assumpt} is not essential to our algorithm; if such an assumption were not
fulfilled, the derivative formula in Newton's iteration would not hold and the algorithm would instead 
automatically turn to a bisection technique. 
In our numerical experiments we verified the validity of the derivative formula which is based on 
Assumption \ref{assumpt}, and thus obtained a faster convergence with respect to a pure bisection, due to the 
quadratic convergence of the Newton iteration. 
\eng

\bcl
We make a further assumption that is needed for the Newton iteration, as it will ensure that $f'(\eps)$ has no zeros close to $\oeps$.
\ecl

\begin{assumption}
\label{assumpt-G} 
We assume that the gradient $G(\eps):= G_\eps(E(\eps))$ is different from zero for $\eps\in(\oeps, \bar\eps)$.
\end{assumption}


The following result provides an inexpensive formula for the computation
of the derivative of $\eps \mapsto \phi_\eps(E(\eps))$, which will be used in a Newton-type outer iteration of the method. 
We denote
$$
f(\eps) =\phi_\eps(E(\eps)), \quad\
G(\eps) = G_\eps(E(\eps)), \quad\
\kappa(\eps)=\frac 1{x(\eps)^* y(\eps)}.
$$
\begin{lemma}
\label{lem:der}
Under Assumptions~{\rm \ref{assumpt}} and~{\rm \ref{assumpt-G}}, 
the function $f$ is differentiable and monotonically increasing on the interval $(\oeps, \bar\eps)$, and its derivative satisfies (with ${\phantom{a}'}= d/d\eps$)
\begin{equation}
f'(\eps)  =  \kappa(\eps)\,\| G(\eps) \|_F.
\label{eq:derFdeps}
\end{equation}
\end{lemma}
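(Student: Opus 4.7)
The plan is to differentiate $f(\eps)=\phi_\eps(E(\eps))=\Re\lambda(M(X+L[\eps E(\eps)]))$ by extending the computation of Lemma~\ref{lem:gradient} to a path that simultaneously varies $\eps$ and $E$. Under Assumption~\ref{assumpt}, the eigenvalue $\lambda(\eps)$ is simple and smooth, so (after our normalization) the left and right eigenvectors $x(\eps),y(\eps)$ are smooth, and together with the assumed smoothness of $E(\eps)$ this makes $f$ differentiable on $(\oeps,\bar\eps)$.

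Following the calculation preceding Lemma~\ref{lem:gradient} verbatim, but with a path of the form $Z(t)=\eps(t)E(t)$ in place of $\eps E(t)$, one obtains
\begin{equation*}
\frac{d}{dt}\,\Re\lambda(M(X+L[Z(t)])) \;=\; \kappa(t)\,\bigl\langle L^\ast[M'(X+L[Z(t)])^\ast[\Re(xy^*)]],\,\dot Z(t)\bigr\rangle.
\end{equation*}
Specializing to $Z(\eps)=\eps E(\eps)$, whose $\eps$-derivative is $E(\eps)+\eps E'(\eps)$, and identifying the first slot of the pairing with $G(\eps)=G_\eps(E(\eps))$ via \eqref{eq:grad}, I get
\begin{equation*}
f'(\eps) \;=\; \kappa(\eps)\,\langle G(\eps),E(\eps)\rangle \;+\; \eps\,\kappa(\eps)\,\langle G(\eps),E'(\eps)\rangle.
\end{equation*}

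To evaluate the two inner products I invoke Theorem~\ref{thm:stat}. Because $E(\eps)$ is a local maximizer of $\phi_\eps$ on the unit Frobenius sphere and $G(\eps)\ne 0$ by Assumption~\ref{assumpt-G}, item~3 forces $E(\eps)$ to be a real multiple of $G(\eps)$; the multiplier must be positive (otherwise the gradient flow \eqref{ode-E} with the $+$ sign would locally increase $\phi_\eps$ away from $E(\eps)$, contradicting maximality), so $E(\eps)=G(\eps)/\|G(\eps)\|_F$. Hence $\langle G(\eps),E(\eps)\rangle=\|G(\eps)\|_F$, and differentiating $\|E(\eps)\|_F^2=1$ yields $\langle E(\eps),E'(\eps)\rangle=0$, so that $\langle G(\eps),E'(\eps)\rangle=\|G(\eps)\|_F\,\langle E(\eps),E'(\eps)\rangle=0$. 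Substituting these into the previous display gives \eqref{eq:derFdeps}. Monotonicity is immediate: $\kappa(\eps)>0$ by \eqref{kappa} and $\|G(\eps)\|_F>0$ by Assumption~\ref{assumpt-G}.

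The main obstacle is to pin down the correct \emph{sign} of the multiplier between $E(\eps)$ and $G(\eps)$: Theorem~\ref{thm:stat} only asserts parallelism, whereas the positive multiple is required to land on $\langle G,E\rangle=+\|G\|_F$ (rather than $-\|G\|_F$) and thereby to obtain both the clean formula \eqref{eq:derFdeps} and the monotonicity of $f$. Aside from that, the argument is a clean chain-rule computation together with the unit-norm constraint $\langle E(\eps),E'(\eps)\rangle=0$, which eliminates the implicit $E'(\eps)$ term.
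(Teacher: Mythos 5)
Your chain-rule computation, the resulting formula $f'(\eps)=\kappa(\eps)\,\langle G(\eps),E(\eps)+\eps E'(\eps)\rangle$, and the elimination of the $E'$ term via $\langle E(\eps),E'(\eps)\rangle=0$ combined with the parallelism of $G(\eps)$ and $E(\eps)$ from Theorem~\ref{thm:stat} all match the paper's proof. You also correctly identify the sign of the multiplier as the crux. But your resolution of that crux does not work: it is \emph{not} true that at a local maximizer of a function on the unit sphere with nonzero ambient gradient $G$ the Lagrange multiplier $\mu=\langle G,E\rangle$ must be positive. First-order stationarity only gives $G=\mu E$ with $\mu=\pm\|G\|_F$; whether the constrained critical point is a maximum or a minimum is governed by second-order data, not by the sign of $\mu$. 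A concrete counterexample: $\phi(x)=-x_1-10\|x\|^2$ on the unit circle in $\R^2$ attains its global maximum at $E=(-1,0)$, where the ambient gradient is $G=(19,0)=-19E$, so $\mu=-19<0$. Your parenthetical appeal to the gradient flow does not repair this, since at a stationary point the flow vanishes and its local behavior again depends on second-order information.

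The paper therefore does not fix the sign pointwise. It concludes only $|\langle G(\eps),E(\eps)\rangle|=\|G(\eps)\|_F$, hence $|f'(\eps)|=\kappa(\eps)\,\|G(\eps)\|_F$, and then determines the sign globally: since $f(\oeps)=0$ and $f(\eps)>0$ for $\eps>\oeps$ near $\oeps$, the derivative $f'$ must be positive somewhere; and since $\kappa(\eps)>0$ and $G(\eps)\neq 0$ on all of $(\oeps,\bar\eps)$ by Assumption~\ref{assumpt-G}, $f'$ never vanishes and hence cannot change sign, so $f'>0$ throughout the interval. Replacing your local multiplier argument with this global one, the rest of your write-up goes through.
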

\medskip
\begin{proof} Differentiating $f(\eps)=\phi_\eps\bigl( E(\eps) \bigr)$ with respect to $\eps$ we obtain, by the same calculation that led to Lemma~\ref{lem:gradient},
\begin{eqnarray}
\hskip -9mm
f'(\eps) & = & \kappa(\eps)\,
\bigl\langle G(\eps), E(\eps)+\eps E'(\eps) \bigr\rangle.
\label{eq:dFdeps}
\end{eqnarray}
From here on the proof is completed by observing that $|\bigl\langle G(\eps), E(\eps) \bigr\rangle| =  \| G(\eps) \|_F$ and 
$\bigl\langle G(\eps),  E'(\eps) \bigr\rangle =0$, as in Lemma 3.2 in \cite{Guglielmi2017}.
This yields $|f'(\eps)|  =  \kappa(\eps)\,\| G(\eps) \|_F$. Since  $f(\oeps)=0$ and $f(\eps)>0$ for $\eps>\oeps$ near $\oeps$, the derivative $f'$ must somewhere be positive. Since $\kappa(\eps)>0$ and $G(\eps)\ne 0$ by Assumption, $f'$ cannot change sign and is therefore positive for all $\eps \in (\oeps, \bar\eps)$.
\end{proof}

\bcl
Under a further assumption, we will be able to study the asymptotic behavior of $f(\eps)$ as $\eps\searrow\oeps$ beyond the mere fact
that $f(\eps)=\Re \lambda(\eps) \searrow 0$ as $\eps\searrow\oeps$.

\begin{assumption} \label{assumpt-epsstar}
We assume that the limit $M(\oeps):=\lim_{\eps\searrow\oeps} M(\eps)$ of the Hamiltonian matrices exists and that the purely imaginary eigenvalue
$\lambda(\oeps)=\lim_{\eps\searrow\oeps} \lambda(\eps)$ of $M(\oeps)$ has algebraic multiplicity two and is defective (that is, the zero singular value of $M(\oeps)-\lambda(\oeps) I$ is simple).
\end{assumption}

By definition of $\oeps$, the eigenvalue $\lambda(\oeps)$ is on the imaginary axis and has even multiplicity
because of the symmetry of the eigenvalues with respect to the imaginary axis. Here we assume multiplicity two. 
There are important classes of systems where the defectivity of the eigenvalue on the imaginary axis is a known fact.
It follows from \cite[Theorem 3.6]{KotBBP19} that the eigenvalue $\lambda(\oeps)$ is defective if the limit system $\widehat X=X+L[\oeps E(\oeps)]$ has a positive real transfer function and a positive definite matrix $\widehat D+\widehat D^\top$, and if the eigenvalue $\lambda(\oeps)$ is the only eigenvalue of $M_p(\widehat X)$ on the imaginary axis and is controllable. We do not know of an analogous result for the bounded real case.
\ecl

\bng

Assumption \ref{assumpt-epsstar} has partial relevance to the outer iteration. It has no role in Algorithm
\ref{alg_dist} (treating the case where $\delta$ is not very small), but only in Algorithm \ref{algo},
which implements the outer iteration in the case where $\delta \ll 1$. 
However,  Assumption \ref{assumpt-epsstar} is not essential to Algorithm \ref{algo}.
If the assumption were not fulfilled, the algorithm would not make use of the 
iteration \eqref{eq:stepk}-\eqref{eq:stepk2} that converges quadratically under Assumption \ref{assumpt-epsstar}, and would instead automatically reduce to a simple 
bisection technique. 
In our numerical experiments we have always observed the validity of the assumption and obtained
a faster convergence than with a pure bisection due to the quadratically convergent behavior
of the iterative method \eqref{eq:stepk}-\eqref{eq:stepk2}. 

\eng
\bcl
Under Assumption \ref{assumpt-epsstar}, the eigenvalue $\lambda(\oeps)$ of $M(\oeps)$ is non-derogatory, that is, only a single Jordan block corresponds to this eigenvalue, and hence its left and right eigenspaces are of dimension 1. Since $\lambda(\oeps)$ is a defective 
eigenvalue, left and right eigenvectors at $\oeps$ are orthogonal to each other: $x(\oeps)^*y(\oeps) =0$.

We will need the following result.

\begin{theorem} \label{thm:yJx} 
Under Assumptions~{\rm \ref{assumpt}}--\,{\rm \ref{assumpt-epsstar}} and under a nondegeneracy condition on eigenvectors of $M(\eps)$ stated in \eqref{nondeg-condition} below, 
there exist left and right eigenvectors $x(\eps)$ and $y(\eps)$ to the eigenvalue
$\lambda(\eps)$ of the Hamiltonian matrix $M(\eps)$, normalized to norm 1 with 
 $x(\eps)^*y(\eps)>0$ for $\eps>\oeps$, that depend continuously on $\eps$ in the closed interval $[\oeps,\bar\eps]$ (in particular, they converge for $\eps\searrow \oeps$).
In the limit we have
\[
y(\oeps) = \pm Jx(\oeps),
\]
where the sign depends on $x(\eps)$ for $\eps$ near $\oeps$.
\end{theorem}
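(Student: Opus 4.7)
The plan rests on combining the Hamiltonian spectral symmetry with a Puiseux analysis of eigenpairs at the defective coalescence. The Hamiltonian relation $JM + M^\top J = 0$ implies that if $v$ is a right eigenvector of $M$ for an eigenvalue $\mu$, then $Jv$ is a left eigenvector for $-\bar\mu$. Applied to the mirror eigenvalue $\tilde\lambda(\eps) := -\overline{\lambda(\eps)}$ of $M(\eps)$ and its unit right eigenvector $\tilde y(\eps)$, this says that $J\tilde y(\eps)$ is a left eigenvector of $M(\eps)$ for $\lambda(\eps)$. Since $\lambda(\eps)$ is simple on $(\oeps, \bar\eps)$ by Assumption~\ref{assumpt}, there is a unimodular scalar $\alpha(\eps) \in \C$ with
\[
x(\eps) = \alpha(\eps)\, J\tilde y(\eps).
\]
Continuity of $y(\eps)$ on $[\oeps,\bar\eps]$ will follow directly from the Puiseux expansion developed below, and continuity of $x(\eps)$ together with the concluding identity $y(\oeps) = \pm J x(\oeps)$ both follow once we establish that $\alpha(\eps)$ admits a real limit $\alpha_0 \in \{-1,+1\}$ as $\eps \searrow \oeps$: indeed then $y(\oeps) = \tilde y(\oeps) =: y_0$, so $x(\oeps) = \alpha_0 J y_0$, and $J^2 = -I$ yields $y(\oeps) = -\alpha_0 J x(\oeps)$.

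Under Assumption~\ref{assumpt-epsstar} the coalescing eigenvalue $\lambda(\oeps)$ has a single $2\times 2$ Jordan block. Classical perturbation theory for a non-derogatory defective coalescence then provides convergent Puiseux expansions in the parameter $s := \sqrt{\eps - \oeps}$,
\[
y(\eps) = y_0 + s\,y_1 + O(s^2), \qquad \tilde y(\eps) = y_0 - s\,y_1 + O(s^2),
\]
with a common leading vector $y_0$ (after suitable normalization of phases) and the sign flip reflecting the two determinations of the square root. The second key ingredient is the $J$-isotropy $y_0^* J y_0 = 0$ at the defective limit: for a simple purely imaginary eigenvalue of a Hamiltonian matrix, $Jy$ is a nonzero multiple of the left eigenvector, hence $y^* J y$ is a nonzero multiple of $x^* y \neq 0$; at a defective coalescence $x^* y = 0$, and this forces $y_0^* J y_0 = 0$.

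Substituting the Puiseux expansions into
\[
x(\eps)^* y(\eps) = -\bar\alpha(\eps)\, \tilde y(\eps)^* J y(\eps),
\]
and using $J^* = -J$ together with $y_0^* J y_0 = 0$, a short calculation yields
\[
x(\eps)^* y(\eps) = -2 s\,\bar\alpha(\eps)\,\Re\beta + O(s^2), \qquad \beta := y_0^* J y_1.
\]
The nondegeneracy condition \eqref{nondeg-condition} amounts to $\Re\beta \neq 0$. Imposing the normalization $x(\eps)^* y(\eps) > 0$ for $\eps > \oeps$ then forces $\arg\alpha(\eps) \to 0$ or $\pi$ according to the sign of $\Re\beta$, hence $\alpha(\eps) \to \alpha_0 \in \{-1,+1\}$, completing the argument.

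The main technical obstacle is the rigorous joint Puiseux analysis of the two coalescing branches together with the $J$-isotropy relation; both rest on the non-derogatory Jordan structure of Assumption~\ref{assumpt-epsstar} and on the Hamiltonian pairing of $\lambda(\eps)$ with $\tilde\lambda(\eps)$. Once these ingredients are secured, the remaining steps are a straightforward accounting of phases and leading orders.
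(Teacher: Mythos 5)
Your overall architecture is attractive and the two structural ingredients you isolate are correct: the Hamiltonian pairing ($Jv$ right eigenvector for $\mu$ gives a left eigenvector for $-\bar\mu$, hence $x(\eps)=\alpha(\eps)J\tilde y(\eps)$ with $|\alpha|=1$) and the $J$-isotropy $y_0^*Jy_0=0$ at the defective limit. The phase-pinning of $\alpha(\eps)$ via the normalization $x(\eps)^*y(\eps)>0$ also parallels what the paper does (there, via the positivity of $s(\eps)^*p(\eps)$ and the reality of $s(\eps)^*r(\eps)$ in Schur--Hamiltonian coordinates). However, there is a genuine gap at the step you yourself flag as the "main technical obstacle": the convergent Puiseux expansions $y(\eps)=y_0+s\,y_1+O(s^2)$, $\tilde y(\eps)=y_0-s\,y_1+O(s^2)$ in $s=\sqrt{\eps-\oeps}$ are not available under the paper's hypotheses. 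Assumption~\ref{assumpt} gives smoothness of $E(\eps)$, $\lambda(\eps)$ only on the \emph{open} interval $(\oeps,\bar\eps)$, and Assumption~\ref{assumpt-epsstar} gives only the \emph{existence of the limit} $M(\oeps)$ --- no analyticity, no two-sided extension, not even one-sided differentiability of $\eps\mapsto M(\eps)$ at $\oeps$ (recall that $M(\eps)=M(X+L[\eps E(\eps)])$ contains the optimizer $E(\eps)$, whose regularity at $\oeps$ is unknown). Classical Puiseux theory for a non-derogatory coalescence requires an analytic (or at least suitably regular) matrix family through the coalescence point. Worse, assuming an expansion in $\sqrt{\eps-\oeps}$ essentially presupposes the square-root behavior that Theorem~\ref{thm:sqrt} is subsequently at pains to \emph{derive} from Theorem~\ref{thm:yJx}; your route risks circularity with the paper's logical structure.

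The paper avoids expansions entirely: it applies Paige's real Schur--Hamiltonian decomposition $S(\eps)^{-1}M(\eps)S(\eps)=\begin{pmatrix} F & H \\ 0 & -F^\top\end{pmatrix}$ for $\eps>\oeps$, extracts convergent subsequences of $x_0(\eps_n),y_0(\eps_n),S(\eps_n)$ by compactness, uses the one-dimensionality of the limiting eigenspaces (Assumption~\ref{assumpt-epsstar}) to identify the limits up to unimodular scalars $\xi,\eta$, and pins down $\eta=\pm1$ from \eqref{sp-pos}, \eqref{sr-real} and the condition \eqref{nondeg-condition}; uniqueness of the limit then upgrades subsequential to full convergence. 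A second, smaller defect of your write-up: the nondegeneracy condition \eqref{nondeg-condition} is formulated in terms of the subvectors $s(\eps),r(\eps)$ of the Schur--Hamiltonian eigenvectors, objects you never introduce; your claim that it "amounts to $\Re\beta\neq 0$" with $\beta=y_0^*Jy_1$ is plausible but unproved, and in any case $y_1$ does not exist without the expansion. To repair the proof you would either need to add analyticity of $\eps\mapsto M(\eps)$ at $\oeps$ as a hypothesis (strictly stronger than what the theorem assumes), or replace the Puiseux step by a compactness/subsequence argument of the paper's type.
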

\ecl

\begin{proof}
%
%
%
\bng
By \cite[Theorem 5.1]{Paige81}, 
since $M(\eps)$ has no imaginary eigenvalues for $\eps > \oeps$,
it admits a real Schur-Hamiltonian decomposition, i.e., there exists an orthogonal symplectic real matrix 
$S(\eps)$ (that is, $S(\eps)^\top S(\eps)=I$ and $S(\eps)^\top J S(\eps)=J$) for $\eps > \oeps$ that \eng\bcl transforms $M(\eps)$ to a block triangular Hamiltonian matrix
\ecl
\begin{equation}
M_0(\eps) = S(\eps)^{-1} M(\eps) S(\eps)
= \begin{pmatrix} F(\eps) & H(\eps) \\ 0 & -F(\eps)^\top \end{pmatrix} ,
\end{equation}
\bng
where $H(\eps)$ is symmetric and $F(\eps)$ is upper quasi-triangular.  
\eng

For the eigenvalue $\lambda(\eps)$, the left and right eigenvectors of $M_0(\eps)$ are related to those of $M(\eps)$ by
\begin{equation} \label{x-x0}
x_0(\eps) = S(\eps)^\top x(\eps), \qquad y_0(\eps) = S(\eps)^{-1} y(\eps).
\end{equation}
We assume that $x(\eps)$ and $y(\eps)$ are normalized to norm 1 and such that $x(\eps)^*y(\eps)>0$ for $\eps>\oeps$, and hence we have also 
\begin{equation} \label{x0y0pos}
\text{$x_0(\eps)$ and $y_0(\eps)$ are of norm 1 and\ }\
x_0(\eps)^*y_0(\eps)>0.
\end{equation}
Noting that the lower half of the right eigenvector $y_0(\eps)$ to the block triangular matrix $M_0(\eps)$ consists only of zeros, 
we split the eigenvectors  into the upper and the lower $n/2$-dimensional subvectors as
\begin{equation}
y_0(\eps) = \left( \begin{array}{r}  -p(\eps) \\ 0\ \ \end{array} \right), \qquad 
x_0(\eps) = \left( \begin{array}{r} -s(\eps) \\ r(\eps) \end{array} \right) .
\end{equation}
By the Hamiltonian symmetry, we can represent the eigenvectors associated with the eigenvalue ${}-\conj{\lambda}(\eps)$ as
$\widetilde y_0(\eps) = J x_0(\eps)$, $\widetilde x_0(\eps) = J y_0(\eps)$.
This gives
\begin{equation}
\widetilde y_0(\eps) =  \left( \begin{array}{r} r(\eps) \\ s(\eps) \end{array} \right), \qquad 
\widetilde x_0(\eps) = \left( \begin{array}{r} 0\ \ \\ p(\eps) \end{array} \right). 
\end{equation}
\bcl
By compactness, there exists a sequence $(\eps_n)$ with $\eps_n \searrow \oeps$ as $n\to\infty$ such that $x_0(\eps_n)$, $y_0(\eps_n)$ and $S(\eps_n)$ converge to vectors $\widehat x_0$, $\widehat y_0$ of norm 1 and an  orthogonal symplectic real matrix $\widehat S$. By the continuity of $M(\cdot)$ and $\lambda(\cdot)$ at $\oeps$, the limit vectors $\widehat x_0$, $\widehat y_0$ are then left and right eigenvectors  corresponding to the purely imaginary eigenvalue $\lambda(\oeps)$ of $M(\oeps)$.

By Assumption~\ref{assumpt-epsstar}, the left and right eigenspaces to $\lambda(\oeps)$ are one-dimensional, and so we have that for some complex $\xi,\eta$ of unit modulus,
\begin{equation}\label{y0-x0-limits}
\lim\limits_{n\to\infty} \widetilde{y}_0(\eps_n) = -\eta \lim\limits_{n\to\infty} y_0(\eps_n), \qquad
\lim\limits_{n\to\infty} \widetilde{x}_0(\eps_n) = \xi \lim\limits_{n\to\infty} x_0(\eps_n) .
\end{equation}   
We thus obtain 
\begin{equation}\label{s-to-zero}
\lim_{n\to\infty}  s(\eps_n) = 0
\end{equation} 
and
\begin{equation} \label{eq:limits}
\lim\limits_{n\to\infty}  r(\eps_n) = \eta \lim\limits_{n\to\infty} p(\eps_n), \qquad
\lim\limits_{n\to\infty}  p(\eps_n) = \xi \lim\limits_{n\to\infty} r(\eps_n) ,
\end{equation} 
\ecl
so that
\begin{equation}
\xi = \bar\eta.
\end{equation}
By \eqref{x0y0pos},
\begin{equation} \label{sp-pos}
s(\eps)^*p(\eps) \ \text{ is real and positive for } \eps > \oeps,
\end{equation}
\bcl
and in particular, $s(\eps)\ne 0$ for $\eps > \oeps$ (but recall \eqref{s-to-zero}).
\ecl
As is noted in \cite[(1.4)]{Paige81},
\begin{equation} \label{sr-real}
s(\eps)^* r(\eps) \ \text{ is real for } \eps > \oeps.
\end{equation}
\bcl
Under the nondegeneracy condition
\begin{equation}\label{nondeg-condition}
\liminf_{\eps\searrow\oeps} \,\left| \left(\frac{ s(\eps) }{ \|s(\eps)\| }\right)^* \frac{ r(\eps) }{ \|r(\eps)\| } \right| > 0,
\end{equation}
which states that the normalizations of the vectors $s$ and $r$ are not asymptotically orthogonal,
we conclude that there is a subsequence $(\eps_n')$ of $(\eps_n)$ such that the normalized sequence $\bigl(s(\eps_n')/\|s(\eps_n')\|\bigr)$ is convergent and (on noting $\|r(\eps_n)\|\to 1$)
\begin{equation}\label{sp-conv}
\lim_{n\to\infty} \frac{ s(\eps_n')^*r(\eps_n') }{ \|s(\eps_n')\| } \ne 0.
\end{equation}
As \eqref{eq:limits} implies that this nonzero limit equals
$$
\lim_{n\to\infty} \frac{ s(\eps_n')^*r(\eps_n') }{ \|s(\eps_n')\| } =
\eta \lim_{n\to\infty} \frac{ s(\eps_n')^*p(\eps_n') }{ \|s(\eps_n')\| }  
$$
and the two limits in this formula are real by \eqref{sp-pos} and \eqref{sr-real}, 
it follows that $\eta$ is real and hence $\eta$ equals $1$ or $-1$. In view of $\eqref{sp-pos}$ and \eqref{nondeg-condition}, we actually have
\begin{equation}\label{eta-sr}
\eta=\lim_{\eps\searrow\oeps}\ \mathrm{sign}( s(\eps)^* r(\eps)) = \pm 1,
\end{equation}
which depends only on the left eigenvector $x_0(\eps)$.
As a consequence, we obtain from \eqref{y0-x0-limits} that
\begin{equation} \label{eq:limy0}
\widehat y_0 = -\eta J \widehat x_0= \mp J \widehat x_0.
\end{equation}
By \eqref{x-x0} we have
\begin{equation}
x(\eps) =  \left( S(\eps)^\top \right)^{-1} x_0(\eps) =  - J S(\eps) J x_0(\eps), \qquad y(\eps) = S(\eps) y_0(\eps) = \pm S(\eps) J x_0(\eps)
\end{equation}
and therefore the limits $\widehat x = \lim_{n\to\infty} x(\eps_n)$ and $\widehat y = \lim_{n\to\infty} y(\eps_n)$ exist and satisfy
\begin{equation} \label{eq:limy}
\widehat y = \pm J \widehat x.
\end{equation}
We now use once again that by Assumption~\ref{assumpt-epsstar}, the left and right eigenspaces to $\lambda(\oeps)$ are one-dimensional. 
Hence $\widehat x$ is a complex multiple of the unique left eigenvector $x(\oeps)$ of norm 1 for which the first nonzero entry is positive.
If we choose the eigenvectors $x(\eps)$ such that their corresponding entry is also nonnegative, then we find that every convergent subsequence
$(x(\eps_n))$ converges to the same limit $x(\oeps)$ as $n\to\infty$, and hence $x(\eps)$ converges to $x(\oeps)$ as $\eps\searrow\oeps$. To the left eigenvector $x(\eps)$, there corresponds a unique right eigenvector $y(\eps)$ of norm 1 that satisfies
$x(\eps)^*y(\eps)>0$ for $\eps>\oeps$. By \eqref{eq:limy}, the limit of every convergent subsequence $(y(\eps_n))$ converges to
$\pm J \lim_{n\to\infty} x(\eps_n)=\pm Jx(\oeps)$, and hence the limit $y(\oeps):= \lim_{\eps\searrow\oeps} y(\eps)$ exists, is a right eigenvector of $M(\oeps)$ to the eigenvalue $\lambda(\oeps)$, and it equals
$$
y(\oeps) = \lim_{\eps\searrow\oeps} y(\eps) = \pm J \lim_{\eps\searrow\oeps} x(\eps) = \pm J x(\oeps),
$$
which completes the proof.
\ecl
\end{proof}

\bcl
We are now in a position to characterize the asymptotic behavior of the function $f(\eps)=\Re\lambda(\eps)$ as $ \eps \searrow \oeps$.
\ecl

\begin{theorem}\label{thm:sqrt}
Under the assumptions of Theorem~{\rm \ref{thm:yJx}} and if $G(\oeps)=\lim_{\eps\searrow\oeps} G(\eps)$ exists and is different from $0$,
we have
$$
\Re \lambda(\eps)= \gamma \,\sqrt{\eps-\oeps} \;(1+o(1)) \quad\text{ as } \eps \searrow \oeps
$$
for some positive constant $\gamma$.
\end{theorem}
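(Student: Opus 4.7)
The plan is to combine the derivative formula from Lemma~\ref{lem:der} with an analysis of the blow-up rate of $\kappa(\eps)=1/(x(\eps)^*y(\eps))$ as $\eps \searrow \oeps$. Because $\|G(\eps)\|_F \to \|G(\oeps)\|_F > 0$ by hypothesis, Lemma~\ref{lem:der} reduces the problem to pinning down the rate at which $x(\eps)^*y(\eps) \to 0$; once this rate is known, integration of $f'$ gives the asymptotic for $f$.

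First I would verify that $x(\oeps)^*y(\oeps) = 0$, i.e.\ that $\kappa(\eps) \to +\infty$. By Theorem~\ref{thm:yJx}, $y(\oeps) = \pm J x(\oeps)$. Since $J$ is real and skew-symmetric, $\overline{x^* J x} = -x^* J x$, so $x(\oeps)^* y(\oeps) = \pm x(\oeps)^* J x(\oeps)$ is purely imaginary. On the other hand $x(\eps)^* y(\eps)$ is real and positive for $\eps > \oeps$, and by the continuity statement in Theorem~\ref{thm:yJx} its limit is a nonnegative real; hence this limit must vanish.

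Next I would establish the $\sqrt{\eps-\oeps}$ rate. Under Assumption~\ref{assumpt-epsstar}, $\lambda(\oeps)$ is a defective eigenvalue of algebraic multiplicity two with a single Jordan block, and $M(\eps)$ is smooth in $\eps$ and admits a limit $M(\oeps)$. The classical Puiseux theory for the perturbation of a nonderogatory defective eigenvalue (Kato, \emph{Perturbation theory for linear operators}, Ch.~II) then gives, under the nondegeneracy condition \eqref{nondeg-condition} used already in Theorem~\ref{thm:yJx}, asymptotic expansions of the form
\[
\lambda(\eps) - \lambda(\oeps) = c\sqrt{\eps-\oeps}\,(1+o(1)), \qquad x(\eps)^* y(\eps) = \beta \sqrt{\eps-\oeps}\,(1+o(1)),
\]
with complex $c \neq 0$ and real $\beta > 0$; the positivity of $\beta$ is forced by the normalization $x(\eps)^*y(\eps) > 0$ for $\eps > \oeps$ (the leading Puiseux coefficient, being real and a limit of positive quantities divided by $\sqrt{\eps-\oeps}$, must be nonnegative, and it is nonzero by the nondegeneracy condition used to rule out orthogonality of $s(\eps)$ and $r(\eps)$ in the proof of Theorem~\ref{thm:yJx}).

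Finally, substituting into Lemma~\ref{lem:der} gives
\[
f'(\eps) = \kappa(\eps)\|G(\eps)\|_F = \frac{\|G(\oeps)\|_F}{\beta\sqrt{\eps-\oeps}}\,(1+o(1)),
\]
and integrating from $\oeps$ to $\eps$, using $f(\oeps)=0$, yields
\[
f(\eps) = \frac{2\|G(\oeps)\|_F}{\beta}\sqrt{\eps-\oeps}\,(1+o(1)) = \gamma\sqrt{\eps-\oeps}\,(1+o(1))
\]
with $\gamma = 2\|G(\oeps)\|_F/\beta > 0$. The main obstacle is the second step: rigorously justifying the Puiseux-type expansion of $x(\eps)^*y(\eps)$ with a positive leading coefficient, which requires invoking the nondegeneracy built into the hypotheses of Theorem~\ref{thm:yJx} together with the defectivity provided by Assumption~\ref{assumpt-epsstar}; once this step is in place, the remainder of the argument is a direct integration.
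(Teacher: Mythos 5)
Your overall reduction is the right one, and it matches the endgame of the paper's proof: once you know $\vartheta(\eps):=x(\eps)^*y(\eps)=\beta\sqrt{\eps-\oeps}\,(1+o(1))$ with $\beta>0$, integrating $f'(\eps)=\kappa(\eps)\|G(\eps)\|_F$ from Lemma~\ref{lem:der} gives the claim. Your observation that $x(\oeps)^*y(\oeps)=\pm x(\oeps)^*Jx(\oeps)$ is purely imaginary, hence zero, is also a nice (if redundant, since defectivity already forces it) check. The problem is that the entire content of the theorem sits in the one step you delegate to ``classical Puiseux theory,'' and that appeal does not go through. First, Puiseux expansions require the matrix family to be analytic in $\eps$; here $M(\eps)=M(X+L[\eps E(\eps)])$ involves the optimizer $E(\eps)$, which Assumption~\ref{assumpt} only makes smooth on the open interval $(\oeps,\bar\eps)$, and Assumption~\ref{assumpt-epsstar} only guarantees that $M(\eps)$ has a \emph{limit} at $\oeps$ --- not even one-sided differentiability there. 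This is precisely why the paper does not invoke Puiseux theory but instead derives, via the group inverse of $N(\eps)=M(\eps)-\lambda(\eps)I$ and the boundedness of $N(\eps)^\dag$ (which is where the simplicity of the zero singular value in Assumption~\ref{assumpt-epsstar} enters), the differential relation $\vartheta'(\eps)\vartheta(\eps)=2\nu(\eps)\mu(\eps)+O(\vartheta(\eps)\mu(\eps))$ with $\nu(\eps)=x(\eps)^*N(\eps)^\dag y(\eps)$ and $\mu(\eps)=x(\eps)^*M'(\eps)y(\eps)$, and then integrates $\frac{d}{d\eps}\vartheta^2$.

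Second, even granting an analytic setting, the nonvanishing of the leading coefficient $\beta$ is not a consequence of the nondegeneracy condition \eqref{nondeg-condition}: that condition governs the convergence of the eigenvectors in Theorem~\ref{thm:yJx}, not the splitting rate of the eigenvalue. What is actually needed is $\Re\bigl(\mu(\oeps)\nu(\oeps)\bigr)>0$, and establishing this is where the Hamiltonian structure is indispensable: the paper uses $y(\oeps)=\pm Jx(\oeps)$ and the Hermitian symmetry of $JN(\oeps)$ to show that $\nu(\oeps)$ is \emph{real} and nonzero, which in turn forces $\mu(\oeps)$ to be real, equal to $\|G(\oeps)\|_F>0$ by the hypothesis on $G(\oeps)$, and finally $\mu(\oeps)\nu(\oeps)>0$. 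Your argument that ``$\beta$ is a limit of positive quantities divided by $\sqrt{\eps-\oeps}$, hence nonnegative'' only gives $\beta\ge 0$ \emph{assuming} an expansion of that exact order exists; it cannot rule out $\beta=0$ (i.e.\ $\vartheta$ vanishing faster than $\sqrt{\eps-\oeps}$, which would destroy the integration step), and you never verify the coupling condition that excludes this. So the proposal, as written, assumes the key asymptotic rather than proving it; to repair it you would need to reproduce essentially the paper's group-inverse computation together with the reality argument for $\nu(\oeps)$ coming from Theorem~\ref{thm:yJx}.
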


\begin{proof} 
The proof adapts the proof of Theorem 5.2 in \cite{Guglielmi2015} to the current situation.
We consider the nonnegative function 
$$
\vartheta(\eps) := \frac1{\kappa(\eps)}= x(\eps)^* y(\eps) > 0 \ \text{ for }\ \eps \in (\oeps, \bar\eps), \qquad
\vartheta(\oeps) = 0.
$$ 
Based on results in \cite{MS88,GO11}, the derivative $\vartheta'(\eps)$ is obtained in part (a) of the proof of Theorem 5.2 in \cite{Guglielmi2015} as
\begin{eqnarray}
\vartheta'(\eps) & = & x(\eps)^* M'(\eps) Z(\eps) x(\eps) \vartheta(\eps) +
y(\eps)^* Z(\eps) M'(\eps) y(\eps) \vartheta(\eps) ,
\label{eq:derdelta}
\end{eqnarray} 
where the group inverse $Z(\eps)$ of $N(\eps):=M(\eps)-\lambda(\eps)I$ is related to
the pseudoinverse $N(\eps)^\dag$ by the formulas 
\begin{eqnarray}
Z(\eps) & = & \frac{1}{\vartheta(\eps)^2} \widehat{Z}(\eps)
\nonumber
\\[1mm]
\widehat{Z}(\eps) & = & \bigl( \vartheta(\eps) I - y(\eps) x(\eps)^* \bigr) 
N(\eps)^\dag
\bigl( \vartheta(\eps) I - y(\eps) x(\eps)^* \bigr).
\label{eq:Ghat}
\end{eqnarray} 
\bng
By Assumption \ref{assumpt-epsstar} the second smallest
singular value $\sigma_2(\eps)$ of $M(\eps)-\lambda(\eps)I$
does not converge to zero; as a consequence $
N(\eps)^\dag$ remains bounded as $\eps \searrow \oeps$.
\eng
Thus
\begin{eqnarray}
\widehat{Z}(\eps) & = & 
y(\eps) x(\eps)^* N(\eps)^\dag y(\eps) x(\eps)^* + O(\vartheta(\eps)) 
\\
&=& \nu(\eps) y(\eps)x(\eps)^* + O(\vartheta(\eps)) 
\nonumber
\end{eqnarray}
with the factor
\begin{eqnarray*}
 \nu(\eps) & := & x(\eps)^* N(\eps)^\dag y(\eps),
\label{eq:kappa}
\end{eqnarray*} 
which has $ \nu(\oeps)\ne 0$ by part (b) of the proof of Theorem 5.2 in \cite{Guglielmi2015}. 
Furthermore, we set
$$
\mu(\eps) := x(\eps)^* M'(\eps) y(\eps),
$$
for which we note that $\mu(\eps)=  \lambda'(\eps)\vartheta(\eps)$ and hence, by Lemma~\ref{lem:der}, $\Re\mu(\eps)=\|G(\eps)\|_F$,
which has a well-defined nonzero limit as $\eps \searrow \oeps$.

We insert the expression for the group inverse $Z(\eps)$ into \eqref{eq:derdelta} and
note that $x(\eps)^*M'(\eps)N^\dagger(\eps)x(\eps)=0$ and $y(\eps)^*N^\dagger(\eps)M'(\eps)y(\eps)=0$ because of
$N^\dagger(\eps)x(\eps)=0$ and $y(\eps)^*N^\dagger(\eps)=0$, which follows from $x(\eps)^*N(\eps)=0$ and $N(\eps)y(\eps)=0$, respectively.
We then obtain
\begin{align}\nonumber
\vartheta'(\eps)\vartheta(\eps) &= x(\eps)^* M'(\eps) \widehat Z(\eps) x(\eps)  +
y(\eps)^* \widehat Z(\eps) M'(\eps) y(\eps)  
\\
&= 2 \, \nu(\eps) \mu(\eps) + O(\vartheta(\eps)\mu(\eps)).
\label{vartheta-prime}
\end{align}

In the limit $\eps\searrow\oeps$ we have,
recalling that $y(\oeps)= \pm Jx(\oeps)$  by Theorem~\ref{thm:yJx}  and noting that $JN(\oeps)$ is a hermitian matrix,
$$
 \nu(\oeps) = x(\oeps)^* N(\oeps)^\dag y(\oeps) = x(\oeps)^* (JN(\oeps))^\dag  J y(\oeps) 
= \mp \, x(\oeps)^* (JN(\oeps))^\dag x(\oeps) \in \R,
$$
which implies that
$$
\mu(\eps) (1+O(\vartheta(\eps)) =  \frac{\vartheta'(\eps)\vartheta(\eps)}{2  \nu(\eps)} \in \R.
$$
Using the fact that $\lim_{\eps \searrow \oeps} \Re\mu(\eps)=\|G(\oeps)\|_F$ exists and is nonzero by assumption, this implies that
$$
\mu(\oeps) := \lim_{\eps \searrow \oeps} \mu(\eps) \in \R \quad\text{exists and is nonzero.}
$$
Hence, the right-hand side of \eqref{vartheta-prime} has a nonzero finite real limit as $\eps \searrow \oeps$. 
\bcl
So we have
$$
\frac d{d\eps} \vartheta(\eps)^2 = 2 \vartheta'(\eps)\vartheta(\eps) = 4\mu(\oeps)\nu(\oeps) (1+o(1)) \quad\text{ as }\ \eps\searrow\oeps.
$$
Integrating this relation, recalling $\vartheta(\oeps)=0$ and taking the square root (recall that $\vartheta(\eps)>0$ for $\eps>\oeps$) yields 
\begin{equation}\label{vartheta-asymptote}
\vartheta(\eps) = \sqrt{\eps-\oeps} \ 2\sqrt{ \mu(\oeps)\nu(\oeps)} \,(1+o(1)).
\end{equation}
This further allows us to conclude that $\nu(\oeps)$ is not only nonzero and real but actually positive.

On the other hand, since $\mu(\eps)=  \lambda'(\eps)\vartheta(\eps)$, we find
$$
\Re \lambda'(\eps) = \frac{\Re \mu(\eps)}{\vartheta(\eps)} = \frac{\mu(\oeps)}{\vartheta(\eps)} (1+o(1)) = 
\frac{\vartheta'(\eps)}{\nu(\oeps)} (1+o(1)),
$$
where we used once again  \eqref{vartheta-prime} in the last equality. By integration this implies 
$$
\Re \lambda(\eps) = \frac{\vartheta(\eps)}{\nu(\oeps)} (1+o(1)),
$$
which together with \eqref{vartheta-asymptote} yields the stated result.
\ecl
\end{proof}

\bigskip
\def\oeps{\widehat \eps} 
\def\aleps{\phi_{\eps}}

\section{An algorithm to solve Problem 1 (passivity enforcement)} 
\label{sec:alg1}
To recap, Problem 1 is as follows: given a matrix $X$ such that $M(X)$ has some eigenvalues on the imaginary axis and given a threshold $\delta > 0$, find a nearest 
(in the Frobenius-norm) structured matrix $X+L[\eps E]$ (with $\|E\|_F=1$ and perturbation size $\eps>0$) such that the eigenvalues of $M(X+L[\eps E])$ closest to the imaginary axis are $\delta$-close to it.

We start from an initial perturbation $\eps_0 E_0$ such that 
\[
M(X + L[\eps_0 E_0]) \qquad \mbox{has no eigenvalues on the imaginary axis.} 
\]
This can be obtained by applying methods available in the literature (see e.g. \cite{Grivet2015}). 
We thus have
\[
\phi_{\eps_0}(E_0) > 0, 
\]
where
\[
\phi_\eps(E)= \Re \lambda(M(X+ L[\eps E])) 
\]
with $\lambda(M)$ denoting the leftmost eigenvalue of $M$ in the right complex half-plane
$\C^+ = \{ z \in \C \,|\, \Re z \ge 0\}$, chosen with largest imaginary part.

\medskip
Our aim is to optimize the perturbation size $\eps$.
In particular the algorithm has the goal to compute
\begin{equation} \label{eq:defepsstar}
\widehat \eps_\delta = \inf \{\eps\ge 0: \phi_\eps(E(\eps)) = \delta  \},
\end{equation}
where
\begin{eqnarray*}
E(\eps) & = & \arg\max\limits_{ \| E \|_F = 1} \phi_\eps(E).
\end{eqnarray*}

Repeated use of Algorithm~\ref{alg_step}, which we are going to present in Section \ref{sec:fullrank}, is used to determine the extremizer $E$ such that the leftmost  
eigenvalue of $M(X+L[\widehat \eps_\delta E] )$ in the right half-plane has real part $\delta$.

Since Algorithm~\ref{alg_step} finds {\it locally} leftmost eigenvalues, the algorithms we present here 
are guaranteed only to find upper bounds on $\widehat \eps_{\delta}$; nevertheless they typically 
find good approximations for the cases we have tested. 

\subsection{Integration of the constrained gradient system} \label{sec:fullrank}
We assume that $M(X + L[\eps_0 E_0])$ has no imaginary eigenvalues.
For problems of moderate size we integrate the differential equation   
\begin{equation}
\label{ode-EP1}
\dot{E} = +  G_\eps(E)  - \mu E, \qquad \mu = \langle G_\eps(E), E \rangle.
\end{equation}
which - as we have seen - is the constrained gradient systems for the functional $\phi_\eps(E)$.


In the sequel we use the following notation: all quantities written as $g(\eps)$, like $\lambda(\eps),
E(\eps)$ and so on, are intended to be associated with stationary points (i.e., local extremizers)  of (\ref{ode-EP1}).

We discretize the differential equation \eqref{ode-EP1}  by a projected explicit Euler method with an adaptively chosen stepsize.
At each discretization step, we require -- according to the monotonicity property  of Theorem~\ref{thm:stat} --
that the real part of the leftmost  eigenvalue (of positive real part) $\lambda$ of $X+L[\eps E]$ be decreased, for a given $\eps>0$.

In this way the method determines a sequence $\left(\lambda_n,E_n\right)$
such that \bcl $\Re\,\lambda_n > \Re\,\lambda_{n-1}$, \ecl until $E_n$ approaches a stationary point.
\vspace{-1mm}
\subsubsection*{Euler step of integration} 
Given $E_n$ of unit Frobenius norm,
and given  left and right eigenvectors $x_n$ and $y_n$ of $M(X+L[\eps E_n])$
associated with its leftmost eigenvalue $\lambda_n$ of positive real part, with $x_n^*y_n>0$, we determine $E_{n+1}$ of unit Frobenius norm at time $t_{n+1} = t_n + h_{n}$ by applying a projected Euler step to \eqref{ode-EP1}; see Algorithm~\ref{alg:1}. 

\IncMargin{1em} 
\begin{algorithm}\label{alg:1}
\DontPrintSemicolon
\KwData{$E_n, x_n, y_n, \lambda_n$, $\gamma$ and $\rho_{n}$ (step size predicted by the previous step)}
\KwResult{$E_{n+1}, x_{n+1}, y_{n+1}, \lambda_{n+1}$ and $\rho_{n+1}$}
\Begin{
\nl Set $h=\rho_{n}$, \  $G_n = G(\eps E_n) = L^* \Bigl(M'(X+L[\eps E_n])^*[\Re(x_n y_n^*)]\Bigr)$, \ $\mu_n = \langle G_n, E_n \rangle$\;
\nl Compute
\vspace{-3mm}
\begin{eqnarray*}
\widetilde{E}_{n+1} &=& E_{n} + h  \bigl( G_n  - \mu E_n \bigr) ,
\quad E_{n+1} = {\widetilde{E}_{n+1}}/{\| \widetilde{E}_{n+1} \|_F}
\nonumber
\end{eqnarray*}
\nl Compute the eigenvalue $\widetilde\lambda$ of smallest positive real part (with largest imaginary part) and the corresponding left and right eigenvectors $\widetilde{x}$ and $\widetilde{y}$ of
      $M(X+L[\eps E_{n+1}])$\;
\nl \eIf{$\Re  \widetilde\lambda  \le \Re  \lambda_n $} {
    reject the step, reduce the step size as $h:=h/\gamma$ and
    repeat from 3\;
    }{
    accept the step: set $h_{n+1}= h$, $\lambda_{n+1} = \widetilde{\lambda}$,
    $x_{n+1} = \widetilde{x}$ and $y_{n+1} = \widetilde{y}$\;
    }
\nl \eIf{$h_{n+1} = \rho_{n}$} {increase the stepsize as $\rho_{n+1} := \gamma \rho_{n}$\;
    }{
    set $\rho_{n+1} = \rho_{n}$\;
    }
\nl Proceed to next step\;
}
\caption{Projected Euler step, full rank integration}
\label{alg_step} 
\end{algorithm}

The stepsize control only aims at reducing the real part of the eigenvalue $\lambda$.
As we are only interested in reaching a stationary point of the differential equation, 
an accurate approximation of the exact trajectory $E(t)$ is not of interest here.

The cost of one step of the algorithm is dominated by the computation of leftmost eigenvalues with positive real part
and associated left and right eigenvectors of a Hamiltonian matrix, or equivalently, of the corresponding matrix pencil; see Section~\ref{subsec:intro-ham}.
\subsubsection*{Initial values} \
In the beginning we assume to be provided, by some existing code, a  matrix $\eps_0 E_0$ such that 
$M(X + L[\eps_0 E_0])$ has no imaginary eigenvalues. 
\bcl
It is not necessary that $\eps_0 E_0$ be close to the optimal perturbation. It is, however, advantageous for the following reasons:\\
(i) If $E_0$ is far from $E(\eps_0)$, then reaching a stationary point of (\ref{ode-EP1}) to a given tolerance is computationally more costly.
Moreover, a trajectory starting far from the global extremizer $E(\eps_0)$ is more likely to get stuck in a local optimum.
\\
(ii) If $\eps_0$ is far from $\widehat \eps_\delta$, then the outer iteration takes longer to converge.

In subsequent steps of the outer iteration, we start the differential equation (\ref{ode-EP1}) for $\eps=\eps_{\ell+1}$ with the
approximation to $E(\eps_\ell)$ computed
by Algorithm~\ref{alg_step} for $\eps = \eps_\ell$.
\ecl



\subsubsection*{Terminating at a global extremizer} \ 
In order to find a global (and not just local) extremizer, we run the algorithm with a set of different 
initial values to reduce the possibility of getting trapped in a point 
that is only locally leftmost, which cannot be excluded from following a single trajectory and which we have indeed 
occasionally observed in our numerical experiments. 
In view of the monotonicity of the real part of the eigenvalue along a trajectory (Theorem~\ref{thm:stat}), 
it appears that only exceptional trajectories would run into a stationary point that does not correspond to a locally 
leftmost point of the pseudospectrum, and in fact we never observed such a situation in our extensive numerical experiments.



\subsection{The outer iteration to compute the perturbation size} 
\label{sec:outer}

In order to compute the value of $\widehat \eps_\delta$ defined in~\eqref{eq:defepsstar}, 
starting from $\eps > 0$ such that $\phi_\eps(E(\eps)) > 0$, we want to compute a root $\widehat \eps_\delta$ of the equation
\begin{eqnarray}
f(\eps) := \phi_\eps (E(\eps)) & = & \delta.
\label{eq:adelta}
\end{eqnarray}
Since the function $\eps \mapsto \aleps(E(\eps))$ is monotonically increasing in a right neighborhood of
the minimal solution $\widehat \eps_\delta$ of \eqref{eq:adelta}, and since we have a formula for the derivative of
$\aleps(E(\eps)$ with respect to $\eps$,
we can apply a Newton/bisection method to compute a zero of Equation \eqref{eq:adelta}.

%

\subsubsection*{The case when $\delta$ is not too small: a Newton/bisection method}
For conciseness we omit the dependence on $E$ in the coded algorithms 
and denote it simply by $\aleps$.
We consider a given $\delta > 0$.

\bcl
In view of Lemma \ref{lem:der}, which gives the derivative of $f$, we make use of a Newton iteration,
\[
\eps_{n+1} = \eps_{n} - 
\frac{1}{\kappa(\eps_n) \| G(\eps_n) \|_F}\, \left(f(\eps_n)-\delta \right),
\qquad n=0,1,\ldots,
\]
which is coupled with bisection in Algorithm \ref{alg_dist}.
The method yields quadratic convergence to $\widehat \eps_\delta$ under Assumptions~\ref{assumpt} and~\ref{assumpt-G}.
\ecl

\IncMargin{1em} 
\begin{algorithm}
\DontPrintSemicolon
\KwData{Matrix $X$, $\eps_0$, $E_0$, $k_{\max}$ (max number of iterations), tol (tolerance)\; 
$\eps_{\rm lb}$ and $\eps_{\rm ub}$ (starting values for the lower and upper 
bounds for $\widehat \eps_{\delta}$)} 
\KwResult{$\widehat \eps_\delta$ (upper bound for the measure)}
\Begin{
\nl Solve the ODE (\ref{ode-EP1}) with $\eps=\eps_0$ and $E(0)=E_0$\;
\nl Set $\lambda(\eps_0)$ leftmost eigenvalue of $M(X + L[\eps_0 E(\eps_0)] )$ in the right complex half-plane, 
$x(\eps_0)$ and $y(\eps_0)$ 
left and right eigenvectors\; 
\nl Compute $f(\eps_{0})$, $f'(\eps_0)$\;
\nl \bcl Set $\eps_1 = - (f(\eps_{0})-\delta)/f'(\eps_{0})$\;  \ecl
\nl Set $k=0$\;
\nl Initialize lower and upper bounds: $\eps_{\rm lb}=0$, $\eps_{\rm ub}=+\infty$\; 
\While{$|f(\eps_k)-f(\eps_{k-1})| < {\rm tol}$}{
\nl Solve the ODE (\ref{ode-EP1}) with $\eps=\eps_k$ and  $E(0)=E(\eps_{k-1})$\;
\nl Set $\lambda(\eps_k)$ leftmost eigenvalue of $M(X + L[\eps_k E(\eps_k)] )$ in the right complex half-plane, 
$x(\eps_k)$ and $y(\eps_k)$ 
left and right eigenvectors\; 
\nl Compute $f(\eps_k)$, $f'(\eps_k)$\;
\nl Update upper and lower bounds $\eps_{\rm lb}$, $\eps_{\rm ub}$\; 
\nl \eIf{$f(\eps_k) = 0$} 
{
Set $ \eps_{\rm ub} = \min(\eps_{\rm ub},\eps_k)$\;
Compute $\eps_{k+1} = (\eps_{\rm lb} + \eps_{\rm ub})/2$ \ (bisection step)
} 
{
Set $ \eps_{\rm lb} = \max(\eps_{\rm lb},\eps_k)$\;
\nl Compute $f'(\eps_k)$\;
\nl \bcl Compute $\eps_{k+1} = \eps_{k} - \displaystyle{(f(\eps_k)-\delta)/{f'(\eps_k})}$ \ecl \ (Newton step)
}
\eIf{$k=k_{\max}$}
{Halt}
{Set $k=k+1$}
}
\nl Return $\widehat \eps_\delta = \eps_k$\;
}
\caption{Newton/bisection method for Problem 1}
\label{alg_dist} 
\end{algorithm}



\bcl
As the Newton method relies on these assumptions, we 
combine it with a  bisection method to guarantee convergence also in nonsmooth cases.
On the other hand, in all numerical experiments we have performed, we always 
observed --- at least when $\delta$ is not very small --- a fast convergence in agreement with Assumptions~\ref{assumpt} and~\ref{assumpt-G}. 
\ecl

\subsubsection*{The case of very small $\delta$}
\bcl
For $\eps\searrow\widehat \eps$,  we have by Theorem~\ref{thm:sqrt} for $f(\eps)=\Re \lambda(\eps)$ (and by its proof for $f'(\eps)$) the square-root behavior 
\begin{eqnarray}
\begin{array}{rcl}
f(\eps) & = & \gamma \sqrt{\eps - \widehat \eps} \ (1+o(1)) 
\\[2mm]
f'(\eps) & = & \displaystyle{\frac{\gamma}{2 \sqrt{\eps - \widehat \eps}}} \ (1+o(1)) ,
\end{array}
\label{eq:eps}
\end{eqnarray}
in the expected case of a defective coalescence of two eigenvalues on the imaginary axis.
\ecl
\begin{algorithm}
\DontPrintSemicolon
\KwData{$\delta$, ${\rm tol}$, $\theta$ (default $0.8$), 
and $\eps_0$ (such that $f(\eps_0) > {\rm tol}$)} 
\KwResult{$\widehat \eps_{\delta}$}
\Begin{
\nl Set {\rm Reject} = {\rm False} and $k=0$\;
\nl \While{$|f(\eps_k) - \delta| \ge {\rm tol}$}{
\nl \eIf{${\rm Reject} = {\rm False}$} {
\nl	Set $\widetilde\eps = \eps_{k}$, \ $\widetilde{\theta}=\theta$\; 
    Compute $\gamma_k$ and $\widehat \eps_k$ by (\ref{eq:stepk})\;
\nl Set $\eps_{k+1} = \widehat \eps_k - \displaystyle{\frac{\delta^2}{\gamma_k^2}}$\;
    }{
    Set $\eps_{k+1} = \widetilde\theta\,\eps_{k}+ (1-\widetilde\theta)\,\widetilde\eps$\;
		Set $\widetilde\theta = \theta\widetilde\theta$\;
    }
\nl Set $k=k+1$\;
\nl Compute $f(\eps_k)$ by integrating (\ref{ode-EP1}) with initial datum $E(\eps_{k-1})$\;
\nl Compute $f'(\eps_k)$ by (\ref{eq:derFdeps})\;		
\nl \eIf{$f(\eps_k) < {\rm tol}$}{ 
    Set ${\rm Reject} = {\rm True}$} {
    Set ${\rm Reject} = {\rm False}$ }
}
\nl Print $\widehat \eps_\delta \approx \eps_k$\;
\nl Halt
}
\caption{Basic algorithm for computing the optimal perturbation for small $\delta$ \label{algo}}
\end{algorithm}
For an iterative process, given $\eps_k$, we use Lemma \ref{lem:der} to compute $f'(\eps_k)$ and solve (\ref{eq:eps}) for $\gamma$ and 
$\widehat \eps$, ignoring the $o(1)$ terms.
We denote the solution as $\gamma_k$ and $\widehat \eps_k$, i.e.,
\begin{eqnarray}
\gamma_k & = & \sqrt{2 f(\eps_k) f'(\eps_k)}, \qquad
\widehat \eps_k = \eps_k - \frac{f(\eps_k)}{2 f'(\eps_k)} 
\label{eq:stepk}
\end{eqnarray}
and then compute
\begin{eqnarray}
\eps_{k+1} & = & \widehat \eps_k + {\delta^2}/{\gamma_k^2}.
\label{eq:stepk2}
\end{eqnarray} 
\bcl
Algorithm \ref{algo} is based on these formulas.
%
The test at line {\bf 9} - when positive - means that for $\eps=\eps_k$ there are
coalescing eigenvalues (up to a tolerance {\rm tol}). 
The algorithm shows quadratic convergence as we will illustrate in Section~\ref{sec:num}.
\ecl

\bigskip

\section{Problem 2: distance to the nearest non-passive system}
\label{sec:alg2}
The algorithm for computing the real passivity radius requires only few modifications of the algorithm for passivity enforcement described in the previous section.

\subsection{Integration of the constrained gradient system}
For Problem 2, the differential equation to solve in the inner iteration is given by \eqref{ode-E} with the minus sign, i.e., 
\[
\dot{E} = {}-\bigl( G_\eps(E)  - \mu E \bigr)\qquad \text{with }\ \mu = \langle G_\eps(E), E \rangle.
\]

\subsection{The outer iteration to compute the perturbation size} 
The outer iteration still pivots on the behavior of a pair of non-imaginary eigenvalues close to coalescence on the imaginary axis.
Similar to Theorem \ref{thm:sqrt}, we now have the following result.

\begin{theorem}\label{thm:sqrt2}
Under Assumptions~{\rm \ref{assumpt}--\ref{assumpt-epsstar}} (now for $0<\eps<\oeps$) 
\bcl and the non\-degeneracy condition \eqref{nondeg-condition} \ecl
we have
$$
\Re \lambda(\eps)= \gamma \,\sqrt{\oeps-\eps} \,(1+o(1)) \quad\text{ as } \eps \nearrow \oeps
$$
for some positive constant $\gamma$.
\end{theorem}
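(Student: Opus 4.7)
The plan is to mirror the proof of Theorem~\ref{thm:sqrt} with the direction of approach reversed, $\eps \nearrow \oeps$ replacing $\eps \searrow \oeps$. The crucial structural fact is that for $\eps \in (\underline\eps, \oeps)$ with some $\underline\eps < \oeps$, the Hamiltonian matrix $M(\eps) = M(X + L[\eps E(\eps)])$ still has no purely imaginary eigenvalues, since in Problem~2 the gradient descent drives the leftmost eigenvalue in the right half-plane toward the imaginary axis from the right as $\eps$ increases. Consequently the real Schur-Hamiltonian decomposition of \cite{Paige81} remains available on the interval $(\underline\eps,\oeps)$, and the entire argument of Theorem~\ref{thm:yJx}---compactness of the orthogonal symplectic matrices $S(\eps)$, the one-dimensional limit eigenspaces granted by Assumption~\ref{assumpt-epsstar}, and the sign determination $\eta \in \{-1,+1\}$ via the nondegeneracy condition \eqref{nondeg-condition}---carries over verbatim to yield continuous left and right eigenvectors $x(\eps), y(\eps)$ extending to $\oeps$ with $y(\oeps) = \pm J x(\oeps)$.

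Next, I verify the analog of Lemma~\ref{lem:der} for Problem~2: the minus sign in the gradient system \eqref{ode-E} gives $f'(\eps) = -\kappa(\eps)\|G(\eps)\|_F < 0$ on $(\underline\eps, \oeps)$. The group-inverse computation in the proof of Theorem~\ref{thm:sqrt} is purely algebraic, depending only on the Hamiltonian structure together with $y(\oeps)=\pm Jx(\oeps)$, and therefore applies unchanged. Defining $\vartheta(\eps)=x(\eps)^* y(\eps) > 0$ on $(\underline\eps, \oeps)$ with $\vartheta(\oeps)=0$, and $\nu(\eps)=x(\eps)^* N(\eps)^\dagger y(\eps)$ with $\nu(\oeps)\in\R\setminus\{0\}$, one obtains
\[
\frac{d}{d\eps}\vartheta(\eps)^2 = 4\,\mu(\oeps)\nu(\oeps)\,(1+o(1)) \quad\text{as } \eps \nearrow \oeps,
\]
where $\mu(\eps)=x(\eps)^* M'(\eps) y(\eps)$ now has $\Re\mu(\eps)=-\|G(\eps)\|_F$ with a nonzero real limit. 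Integrating from $\eps$ to $\oeps$ and using $\vartheta(\oeps)=0$ gives $\vartheta(\eps)^2 = -4\mu(\oeps)\nu(\oeps)(\oeps-\eps)(1+o(1))$, which forces $\mu(\oeps)\nu(\oeps)<0$ and yields $\vartheta(\eps)=2\sqrt{-\mu(\oeps)\nu(\oeps)}\sqrt{\oeps-\eps}(1+o(1))$. The identity $\Re\lambda'(\eps)=\vartheta'(\eps)/\nu(\oeps)\cdot(1+o(1))$ integrated from $\eps$ to $\oeps$ together with $\Re\lambda(\oeps)=0$ then delivers $\Re\lambda(\eps)=\gamma\sqrt{\oeps-\eps}(1+o(1))$ for some $\gamma>0$.

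The main obstacle is the careful tracking of signs: the minimization in Problem~2 flips the sign of $f'$ (and thereby of $\mu$) relative to Problem~1, so one must check that $-\mu(\oeps)\nu(\oeps)>0$ in order for the square root to be real and the constant $\gamma$ positive. Beyond this sign bookkeeping, the proof is essentially a replay of the arguments used in Theorem~\ref{thm:sqrt}, with the one-sided limits simply evaluated on the opposite side of $\oeps$.
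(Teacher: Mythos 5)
Your proposal is correct and follows exactly the route the paper intends: the paper gives no separate proof of Theorem~\ref{thm:sqrt2}, stating only that it is ``similar to Theorem~\ref{thm:sqrt}'', and your replay of that proof with the one-sided limits taken from the left, together with the sign bookkeeping ($f'=-\kappa\|G\|_F<0$, hence $\Re\mu(\oeps)=-\|G(\oeps)\|_F<0$, forcing $\mu(\oeps)\nu(\oeps)<0$ and $\nu(\oeps)>0$), is precisely the intended argument. The only point worth flagging is that, as in Theorem~\ref{thm:sqrt}, one implicitly also needs $G(\oeps)=\lim_{\eps\nearrow\oeps}G(\eps)$ to exist and be nonzero, a hypothesis the paper omits from the statement of Theorem~\ref{thm:sqrt2} but which your proof (correctly) uses.
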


Algorithms \ref{alg_dist} and~\ref{algo} are adapted accordingly.

%
%


\section{Low-rank dynamics for large systems}
\label{sec:low-rank}

We recall that the rescaled gradient is
$G_\eps(E)= L^\ast M'(X+L[\eps E])^\ast[\Re(x y^*)] \in \R^{k\times l}$, where we note that $\Re(x y^*)$ has rank at most 2. \bcl Lemmas~\ref{lemma:formulaoperator-p} and~\ref{lemma:formulaoperator-b} then show that for $M=M_p$ and $M=M_b$, respectively, \ecl the matrix $M'(X+L[\eps E])^\ast[\Re(x y^*)]\in \R^{2n\times 2n}$ is of moderate rank independent of $n$. If $L^\ast$ maps matrices of this rank to matrices of moderate rank, say at most $r$, then also $G_\eps(E)$ is of  rank at most $r$ for any $E$. In particular, for $L$ chosen as in \eqref{L-example}, Lemmas~\ref{lemma:formulaoperator-p} and~\ref{lemma:formulaoperator-b} and \eqref{L-star} show that (with $k=p$ and $l=n$)
$$
\text{rank}\,(G_\eps(E)) \le r=8 \quad\text{ for all }E\in \R^{k\times l},
$$
independently of the dimension $n$.
In this section we describe an algorithm that makes use of this low-rank structure. This low-rank algorithm appears particularly suited for large-scale passivation problems.
Since Theorem~\ref{thm:stat} shows that in a stationary point, $E$ is proportional to $G_\eps(E)$ and hence of rank at most $r$, we {\it restrict
the dynamics for $E(t)$ to matrices of rank $r$}, which turns out to have exactly the same stationary points as the original gradient system. This  leads to an algorithm that works with time-dependent factor matrices of dimensions $k\times r$, $l\times r$, and $r\times r$ instead of solving matrix differential equations of dimension $k\times l$ as in the previous section, and moreover, the low-rank structure can be beneficial in the computations of eigenvalues and left and right eigenvectors of  the Hamiltonian matrices $M(X+L[\eps E(t)])$. This makes the low-rank approach favorable for the passivation of high-dimensional systems. The proposed algorithm and its properties are similar to those of the low-rank algorithm in \cite[Section~4]{Guglielmi2017}. 

\subsection{Rank-$r$ matrices and their tangent matrices} 
Following  \cite{Koch2007}, we first collect some basic properties. Matrices of rank $r$ form a manifold, here denoted
$$
\M_r = \{ E\in \R^{k\times l}: {\rm rank}(E)=r \}.
$$
For the computation with rank-$r$ matrices, we represent $E\in\M_r$ in a non-unique way as 
\begin{equation}\label{factorize-m}
E=USV^\top,
\end{equation}
where $U,V\in\R^{k\times l}$  have orthonormal columns and $S\in\R^{r\times r}$ is invertible.  Unlike the singular value decomposition, we do not require $S$ to be diagonal. 

The tangent space
$T_E \M_r$ then consists of all matrices
\begin{equation}\label{tangent-m}
\delta E
=  \delta U SV^\top + U \delta S V^\top + US\delta V^\top,
\end{equation}
where $U^\top\delta U \hbox{ and } V^\top\delta V$ are skew-hermitian $r\times r$ matrices, and  $\delta S$ is an arbitrary $r\times r$ matrix. Moreover, $(\delta U,\delta S,\delta V)$ are determined uniquely by $\delta E$ if one imposes the gauge constraints $U^\top\delta U=0$ and $V^\top\delta V=0$.
The orthogonal projection onto the tangent space $T_E \M_r$ is given by \cite[Lemma 4.1]{Koch2007} as
\begin{equation}\label{project-m}
P_E[G] = GVV^\top- UU^\top G VV^\top +UU^\top G.
\end{equation}

\subsection{The gradient system restricted to rank-$r$ matrices}
 We replace the matrix differential equation \eqref{ode-E} on $\R^{k\times l}$ with the projected differential equation on~$\M_r$: 
\begin{equation}
\dot{E} = \pm P_E \bigl[G_\eps(E) - \mu E\bigr], 
\qquad\hbox{with }\ \mu= \langle G_\eps(E), E \rangle.
\label{ode-E-m}
\end{equation}
The following result combines Theorems 5.1 and 5.2 of \cite{Guglielmi2017}, which apply also to the gradient flow considered here with the same proofs. The result shows, in particular, that the functional $\phi_\eps$ ascends / descends along solutions of \eqref{ode-E-m} and that the differential equations (\ref{ode-E}) and (\ref{ode-E-m}) have the same stationary points.

\begin{theorem}\label{thm:decay-m}  Along solutions $E(t)$ of the matrix differential equation \eqref{ode-E-m} on the rank-$r$ manifold $\M_r$ with an initial value $E(0)\in\M_r$ of unit Frobenius norm we have $\| E(t) \|_F =1$ for all $t$, and
$$
\pm \frac{d}{dt} \phi_\eps\bigl(E(t)\bigr) \ge 0.
$$
If $\mathrm{rank}\,(G_\eps(E(t)))= r$, then the following statements are equivalent:\\[-2mm]
\begin{enumerate}
\item $\frac{d}{dt} \phi_\eps\bigl(E(t)\bigr) = 0$.
\item $\dot E = 0$.
\item $E$ is a real multiple of $G_\eps(E)$.
\end{enumerate}
\end{theorem}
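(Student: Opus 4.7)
The plan is to adapt the proof of Theorem~\ref{thm:stat} to the projected setting, with the tangent-space projection $P_E$ inserted at every step. Two elementary observations drive the whole argument: the curve $t\mapsto(1+t)E$ lies in $\M_r$, so $E\in T_E\M_r$ and therefore $P_E[E]=E$; and $P_E$ is an orthogonal projection and hence self-adjoint, which in particular gives $\langle G_\eps(E),P_E[G_\eps(E)]\rangle=\|P_E[G_\eps(E)]\|_F^2$. Lemma~\ref{lem:gradient} still computes $\tfrac{d}{dt}\phi_\eps(E(t))$ along any smooth matrix path, and in particular along trajectories of \eqref{ode-E-m}.

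For the conservation of $\|E(t)\|_F$ I would use self-adjointness of $P_E$ and $P_E[E]=E$ to write
\[
\tfrac{1}{2}\tfrac{d}{dt}\|E\|_F^2=\langle E,\dot E\rangle=\pm\langle P_E[E],G_\eps(E)-\mu E\rangle=\pm(\mu-\mu\|E\|_F^2),
\]
which vanishes at $\|E\|_F=1$. For the monotonicity of $\phi_\eps$, writing $G=G_\eps(E)$,
\[
\pm\tfrac{1}{\eps\kappa(t)}\tfrac{d}{dt}\phi_\eps(E(t))=\langle G,P_E[G]-\mu E\rangle=\|P_E[G]\|_F^2-\mu^2,
\]
and since $\mu=\langle G,E\rangle=\langle P_E[G],E\rangle$ with $\|E\|_F=1$, Cauchy--Schwarz yields $\mu^2\le\|P_E[G]\|_F^2$. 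This proves the claimed ascent/descent, with equality iff $P_E[G]$ is a real multiple of $E$.

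For the equivalences under $\mathrm{rank}(G)=r$, the implications $3\Rightarrow 2\Rightarrow 1$ are direct: $E=\alpha G$ with $\|E\|_F=1$ forces $\alpha=\pm1/\|G\|_F$ and $\mu=\alpha\|G\|_F^2$, whence $G-\mu E=0$ and $\dot E=0$; and $2\Rightarrow 1$ is immediate. For $1\Rightarrow 3$, the Cauchy--Schwarz equality case gives $P_E[G]=cE$ with $c=\mu$, so $H:=G-\mu E$ satisfies $P_E[H]=0$; by the explicit formula \eqref{project-m} this means $H=(I-UU^\top)H(I-VV^\top)$, i.e., the column space of $H$ is orthogonal to $\mathrm{range}(U)=\mathrm{col}(E)$ and the row space of $H$ is orthogonal to $\mathrm{range}(V)=\mathrm{row}(E)$. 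In orthonormal bases extending $U$ and $V$ the decomposition $G=\mu E+H$ becomes block-diagonal, so that $\mathrm{rank}(G)=\mathrm{rank}(\mu E)+\mathrm{rank}(H)$; together with $\mathrm{rank}(G)=r=\mathrm{rank}(E)$ this forces $H=0$ and hence $G=\mu E$, which is statement~3.

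The main obstacle is this final rank-additivity step, and in particular the degenerate sub-case $\mu=0$ in which the argument becomes vacuous: $\mu=0$ combined with $P_E[G]=0$ and $\mathrm{rank}(G)=r$ would place $G$ entirely in $T_E\M_r^\perp$, so that $G$ is not a real multiple of $E$ despite conditions 1 and 2 holding. I would handle this exactly as in the proof of \cite[Thm.~5.1]{Guglielmi2017}, treating it as a non-generic degenerate stationary configuration that does not arise for trajectories of \eqref{ode-E-m} issued from initial data for which the rank-$r$ ansatz captures the effective direction of the gradient, so that the three conditions are genuinely equivalent on the working set of stationary points.
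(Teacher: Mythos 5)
Your argument is, in substance, the intended one: the paper offers no proof of this theorem beyond the remark that Theorems 5.1 and 5.2 of \cite{Guglielmi2017} ``apply also to the gradient flow considered here with the same proofs'', and those proofs run exactly as yours does. The identities $P_E[E]=E$ and the self-adjointness of the orthogonal projection give norm conservation and $\pm\tfrac{1}{\eps\kappa}\tfrac{d}{dt}\phi_\eps=\|P_E[G]\|_F^2-\mu^2\ge 0$ by Cauchy--Schwarz; the implications $3\Rightarrow 2\Rightarrow 1$ are immediate; and $1\Rightarrow 3$ follows from the equality case $P_E[G]=\mu E$ together with the splitting $G=\mu E+H$, $U^\top H=0$, $HV=0$, and the resulting rank additivity. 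All of these computations are correct.

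The one place where your write-up is not a proof is precisely the sub-case you flag, $\mu=0$ with $P_E[G]=0$. Appealing to ``non-generic configurations that do not arise along trajectories'' does not establish the pointwise equivalence asserted in the theorem, and the case is not excluded by the stated hypotheses: whenever $k\ge 2r$ and $l\ge 2r$ one can have $\mathrm{rank}(G)=r$ with the column and row spaces of $G$ orthogonal to those of $E$, in which case $\dot E=0$ and $\tfrac{d}{dt}\phi_\eps=0$ hold while $E$ is not a real multiple of $G$. So the implication $1\Rightarrow 3$ genuinely requires the additional nondegeneracy $P_E[G_\eps(E)]\ne 0$ --- equivalently $\mu\ne 0$, since $P_E[G]=\mu E$ at any stationary point --- which is implicit in the cited proofs and should be stated as a hypothesis (or verified in the application at hand) rather than argued away by genericity. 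With that condition made explicit, your rank-additivity step closes the argument and the proof is complete.
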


\subsection{A numerical integrator for the rank-$r$ differential equation~\eqref{ode-E-m}}
The $k\times l$ matrix differential equation \eqref{ode-E-m} can be  written equivalently as a system of  differential equations for the factors $U,S,V$; see \cite{Koch2007}. The right-hand sides of the differential equations for $U$ and $V$ contain, however, the inverse of $S$, which leads to difficulties with standard numerical integrators when $S$ is nearly singular, that is, when $E$ is close to a matrix of rank smaller than $r$. We therefore follow the alternative approach of \cite{LO14}. This uses an integration method that is based on splitting the tangent space projection $P_E$, which by \eqref{project-m} is an alternating sum of three subprojections. A time step of the numerical integrator based on the Lie--Trotter splitting corresponding to these three terms can then be implemented in the following way. Like in \cite{Guglielmi2017}, we here consider a variant of the projector-splitting integrator of \cite{LO14} such that the unit Frobenius norm is preserved.

The algorithm starts from the factorized rank-$r$ matrix $E_0=U_0S_0V_0^\top$ of unit norm at time $t_0$ and computes the factors of the approximation $E_1=U_1S_1V_1^\top$, again of unit Frobenius norm, at the next time $t_1=t_0+h$:
\begin{enumerate}
\item With $G_0=G_\eps(E_0)$, set 
\[
   K_1 = U_0S_0 - hG_0 \, V_0
\] 
and, via a QR decomposition, compute the factorization 
\[
     U_1\widehat S_1\widehat \sigma_1 =K_1
\]
with  $U_1\in \R^{k\times r}$ having orthonormal columns and $\widehat S_1\in\R^{r\times r}$ of unit Frobenius norm, and a positive scalar $\widehat\sigma_1$.
\item Set
\[
     \widetilde\sigma_0 \widetilde S_0 = \widehat S_1 +U^\top_1 \,hG_0\, V_0,
\]
where $\widetilde S_0\in\R^{r\times r}$ is of unit Frobenius norm and $\widetilde\sigma_0>0$.
\item Set
\[
    L_1=  V_0 \widetilde S_0^\top - hG_0^\top U_1
\]
and, via a QR decomposition, compute the factorization 
\[
     V_1 S_1^\top\sigma_1 = L_1, 
\]
with  $V_1\in\R^{l\times r}$ having orthonormal columns, with  $S_1\in\R^{r\times r}$ of unit Frobenius norm, and a positive scalar $\sigma_1$.
\end{enumerate}
The algorithm computes a factorization of the rank-$r$ matrix of unit Frobenius norm
\[
     E_1 = U_1 S_1 V^\top_1,
\]
which is taken as an approximation to $E(t_1)$. As is shown in \cite{KLW16}, this is a first-order method that is robust to possibly small singular values of $E_0$ or $E_1$.

\subsection{Use of the low-rank structure in the eigenvalue computation}
For the computation of the gradient matrix $G_\eps(E)$, one needs to compute the  eigenvalue of smallest positive real part and the associated left and right eigenvectors of  the Hamiltonian matrix $M(X+L[\eps E])$. Except in the very first step of the algorithm, one can make use of the eigenvalue of smallest real part of the previous step in an inverse iteration (and possibly of the eigenvalues of second and third smallest real part etc.~to account for a possible exchange of the leading eigenvalue).

Moreover, for the choice \eqref{L-example} of $L$ we get from a perturbation $\Delta Z=\eps E$ with $E=U\Sigma V^\top$ of rank 8 that $C$ is perturbed by $\Delta C=\Delta Z\, Q^{-T}=\eps (U\Sigma) (Q^{-1}V)^\top$ of the same rank 8, which yields the perturbed Hamiltonian matrix
$$
M(X+L[\Delta Z])= M(X) + \Delta M, 
$$
where the perturbation 
\bcl $\Delta M$ is still of moderate rank in view of \eqref{M-p} and \eqref{M-b}.
\ecl
This fact can be put to good use in the computation of the required eigenvalues in the case of a high-dimensional system, using the Sherman--Morrison--Woodbury formula in an inverse iteration.

We further note that if $m,p\ll n$, then $M(X)$ can be viewed as a low-rank perturbation to the matrix
$$
\begin{pmatrix} A & 0 \\ 0 & -A^\top \end{pmatrix}.
$$
With the Sherman--Morrison--Woodbury formula, this can yield an efficient inverse iteration when $A$ is a large and sparse matrix for which shifted linear systems can be solved efficiently.


\section{Numerical illustrations}
\label{sec:num}

The goal of this section is to provide a few illustrative numerical examples. Both Problems 1 and 2 are considered 
\bcl for the bounded real case (passivity in the scattering representation / contractivity). \ecl  
All the numerical values are approximations of the computed values to few decimal digits.

\subsection{Example for Problem 1: passivity enforcement}

We consider an example from \cite{Grivet2004}. In this case the data is a linear control system whose Hamiltonian has some purely imaginary eigenvalues. Our solution strategy is to consider as initial point a perturbation of the system which makes it passive (i.e. which moves all the eigenvalues of the Hamiltonian matrix out of the imaginary axis) and then 
to optimize such a solution.  
The starting solution is computed by means of the algorithm in \cite{Grivet2004}. 
This algorithm perturbs only the matrix $C$  (so that $L$ has the form given by \eqref{L-example}).  

In the following we perform two different experiments.
\begin{itemize}
\item  We optimize in terms of the matrix $C$ only; 
\item  We run our optimization algorithm by perturbing the whole system;
\end{itemize}
 
The original system consists of the following matrices:
\begin{equation} \label{ex:2}
A=\begin{pmatrix}
-1/2 & 1 \\ -1 & -1/2
\end{pmatrix}, B = C^\top = \begin{pmatrix}
1/2 \\ 1/2
\end{pmatrix}, D = \begin{pmatrix}
1/2
\end{pmatrix}
\end{equation}
and the associated Hamiltonian matrix is
\[
\begin{pmatrix}
-\frac{1}{3} & \frac{7}{6} & \frac{1}{3} & \frac{1}{3} \\[2mm]
-\frac{5}{6} & -\frac{1}{3} &  \frac{1}{3} & \frac{1}{3} \\[2mm]
-\frac{1}{3} & -\frac{1}{3} & \frac{1}{3} & \frac{5}{6} \\[2mm]
-\frac{1}{3} & -\frac{1}{3}  & -\frac{7}{6} & \frac{1}{3}
\end{pmatrix}
\]
whose eigenvalues are two pairs of purely imaginary eigenvalues
\begin{equation}
\label{imageig}
\pm 1.1902\ldots \iu, \ \ \pm 0.8660\ldots \iu.
 \end{equation}
\subsubsection*{Perturbing the matrix $C$ only}

The first step is to get an initial approximation $X_0$ which moves all the eigenvalues off the imaginary axis. 
\begin{table}
	\begin{center}
		\begin{tabular}{|c|l|l|}
			it. num. & $f(\eps)$ & $\eps$                   \\
			\hline
			0   &   $0.435329$   &    $0.471928$   \\
			1   &   $0.424050$   &    $0.461975$    \\
			2   &   $0.340189$   &    $0.295810$   \\
			3   &   $0.219163$   &    $0.161991$     \\
			4   &   $0$          &    $0.081834$     \\
			5   &   $0.206323$   &    $0.152732$        \\
			6   &   $0.168366$   &    $0.131636$                        \\
			7   &   $0.110590$   &    $0.112236$          \\
			8   &   $0.005487$   &    $\textbf{0.10}1320$         \\
			9   &   $0.067009$   &    $\textbf{0.10}5804$       \\
			10  &   $0.036737$   &    $\textbf{0.10}2600$      \\
			11  &   $0.018486$   &    $\textbf{0.101}618$       \\
			12  &   $0.009431$   &    $\textbf{0.101}376$        \\
			13  &   $0.011275$   &    $\textbf{0.101}412$        \\
			14  &   $0.009672$   &    $\textbf{0.10138}0$              \\
			15  &   $0.009839$   &    $\textbf{0.10138}3$               \\
			16  &   $0.009957$   &    $\textbf{0.10138}6$                 \\
			17  &   $0.010044$   &    $\textbf{0.10138}7$               \\
			18  &   $0.009991$   &    $\textbf{0.101386}$            \\
			\hline
		\end{tabular}
	\caption{Convergence for passivity enforcement optimization - perturbation on C only: dynamic which moves an eigenvalue $\delta$-close to the imaginary axis \label{Tab2}}
\end{center}
\end{table}
By running the algorithm proposed in \cite{Grivet2004} we get (recalling that only $C$ is perturbed)
\[ 
C_0 = \left(0.2018 \qquad 0.4615 \right)
\]
and the eigenvalues of the Hamiltonian matrix $M(X_0)$ with $X_0=(A, B, C_0, D)$ are 
\[
 \pm 0.3199 \pm 1.0596 \iu.
\]
 
With our algorithms we are able to optimize such a starting solution by moving the eigenvalues closer to the imaginary axis, at a distance $\delta = 10^{-2}$.

First, with $L$ given by \eqref{L-example}, starting from the system  $X_0 = (A, B, C_0, D)$ we run our algorithm by perturbing only the matrix $C_0$ leaving $A, B, D$ unperturbed.  
Here we compute $Q$ such that $Q^\top\in \R^{n\times n}$ is a Choleski factor of the controllability Gramian $G_c=Q^\top Q$, as follows:
\begin{equation*} 
Q = \begin{pmatrix}
0.5916 & 0.0845 \\  0 &  0.3780
\end{pmatrix}
\end{equation*}
 
We get the solution 
\[
\widehat{C} = \left (0.3703 \qquad 0.6115 \right), 
\]
which is closer than $C_0$ to the data matrix $C$; in fact 
\[
\| (\widehat{C}  - C)Q^\top \|_F \approx 0.07941 < 0.18025 = \|(C_0 - C)Q^\top  \|_F.
\]
The eigenvalues of the Hamiltonian matrix $M(\widehat X)$ with $\widehat X=(A,B,\widehat C,D)$ 
are 
\[
\pm 0.9991 \cdot 10^{-2} \pm 1.0004 \iu 
\]
so that the $\delta$-closeness to the imaginary axis is achieved.  Similar convergence properties to the unconstrained case  can be observed in Table  \ref{Tab2}.

\subsubsection*{Perturbing the whole system}

In the second case examined, we  perturb the whole system $X_0 =(A, B, C_0, D)$ and we take $L={\rm Id}$ so that we minimize the Frobenius norm of the perturbation. The computed perturbed solution, corresponding to a  passive system, is given by $\widehat X =(\widehat A,\widehat  B, \widehat C, \widehat D)$ with
\[
 \begin{aligned}
 \widehat{A} &= \begin{pmatrix}
 -0.4592 & 1.0012 \\ -0.9968 & -0.4575
 \end{pmatrix}, \quad \widehat{B} = \begin{pmatrix}
 0.5169 \\ 0.5445
 \end{pmatrix} \\[3mm]
 \widehat{C} &= \begin{pmatrix}
 0.2477 & 0.5029
 \end{pmatrix}, \quad \widehat{D} = \begin{pmatrix}
 0.5587
 \end{pmatrix}
 \end{aligned}
\]
 and the associated Hamiltonian matrix $M(\widehat X)$
has eigenvalues
 $$
 \pm 0.9997 \cdot 10^{-2} \pm 1.0750 \iu.
 $$
The computed distance is
\[
\| \widehat{X} - X \|_F \approx 0.1141.
\]

\subsection{Example with Problem 2: distance to the nearest non-passive system}

This example is taken from \cite{Guglielmi2015}, where the Hamiltonian matrix nearness problem was solved without  
taking into account the original linear time-invariant system $X$. 
The data of the problem are the following matrices $A, B, C, D$:
\begin{equation} \label{ex:1}
A = \begin{pmatrix}
-8 & -4 & -1.5 \\ 4 & 0 & 0 \\ 0 & 1 & 0
\end{pmatrix}, B = \begin{pmatrix}
2 \\ 0 \\ 0 
\end{pmatrix}, C^\top = \begin{pmatrix}
1 \\ 1 \\ 0.75
\end{pmatrix}, D = \begin{pmatrix}
- 0.75
\end{pmatrix}
\end{equation}
hence the eigenvalues of the Haniltonian matrix $M(X)$ are
$$
\pm  6.5856, \ \ \pm 2.5784, \ \ \pm 0.5173.
$$
The positive eigenvalue which is closer to the imaginary axis is $\lambda = 0.5173\ldots$, which we aim 
to move $\delta$-close to the imaginary axis (with $\delta=10^{-2}$).
By running the proposed algorithm we get the following perturbed system $\widehat X =(\widehat A,\widehat  B, \widehat C, \widehat D)$:
\[
\begin{aligned}
\widehat{A} &= \begin{pmatrix}
-8.0008 & -4.0060 & -1.4577 \\ 3.9986 & -0.0102 & 0.0717 \\ -0.0023 & 0.9829 & 0.1196
\end{pmatrix}, \widehat{B} = \begin{pmatrix}
2.0142 \\ 0.0240 \\ 0.0399
\end{pmatrix}, \\  \widehat{C} &= \begin{pmatrix}
0.9991 & 0.9936  & 0.7978
\end{pmatrix}, \widehat{D} = \begin{pmatrix}
-0.7335
\end{pmatrix}.
\end{aligned}
\]
The corresponding eigenvalues of the Hamiltonian matrix $M(\widehat{X})$ are
$$
\pm 6.4472, \ \ \pm 2.7294, \ \ \pm 0.010.
$$
As expected the Hamiltonian matrix is different from the one computed in \cite{Guglielmi2015}, which does not correspond to any perturbed system.  Note that the
perturbed system is relatively close to the original one indicating a small distance to non-passivity.

In Table \ref{Tab1} we report the computed values in the outer iteration (i.e. $f(\eps):= \phi_\eps(\eps E(\eps))= \lambda(\eps)$), which alternates Newton steps to bisection steps.
\begin{table}
\begin{center}
\begin{tabular}{|c|l|l|l|}
 it. number	 & Newton/bisection  & $f(\eps)$  & value of $\eps$  \\
\hline
 0  &                & $0.517251$ & $0$ \\
 1  & initialization & $0.494635$ & $0.01$ \\
 2  &         Newton & $0$        & $0.254734$ \\ 
 3  &      bisection & $0.199354$ & $ 0.132367$ \\
 4  &         Newton & $0$        & $0.237252$ \\
 5  &      bisection & $0$        & $0.184809$ \\
 6  &      bisection & $0.077451$ & $0.158588$ \\
 7  &         Newton & $0$        & $0.171699$ \\
 8  &      bisection & $0$        & $0.\textbf{16}5143$ \\
 9  &      bisection & $0.043277$ & $0.\textbf{16}1866$ \\
 10 &         Newton & $0$        & $0.\textbf{163}505$ \\
 11 &      bisection & $0.029127$ & $0.\textbf{16}2685$ \\
 12 &         Newton & $0.018361$ & $0.\textbf{163}095$ \\
 13 &         Newton & $0.009044$ & $0.\textbf{163}300$ \\
 14 &      bisection & $0.014472$ & $0.\textbf{163}197$ \\
 15 &         Newton & $0.012067$ & $0.\textbf{1632}49$ \\
 16 &         Newton & $0.010662$ & $0.\textbf{1632}74$ \\
 17 &         Newton & $0.009886$ & $0.\textbf{163287}$ \\
\hline
\end{tabular}
\caption{Example \ref{ex:1}. Numerical values for $f(\eps)$  and $\eps$ for increasing number of (outer) iteration. \label{Tab1}}
\end{center}
\end{table}
We observe that the solution computed at iteration $13$ is good enough, though the algorithm needs few more iterations in order to achieve more accurate values 
in the computed solution of the optimization problem.

\subsection*{Acknowledgment}
The authors thank M.~Karow for useful suggestions and S.~Grivet-Talocia for  
providing interesting examples for Problem 2.

\bcl We are grateful to two anonymous referees and to S.~Grivet-Talocia for their constructive comments on a previous version of this paper. \ecl

Nicola Guglielmi acknowledges that his research was supported by funds from the
Italian MUR (Ministero dell'Universit\`a e della Ricerca) within the PRIN 2017
Project ``Discontinuous dynamical systems: theory, numerics and applications''
and by the INdAM Research group GNCS (Gruppo Nazionale di Calcolo Scientifico).

\bigskip

\bibliographystyle{plain}
\bibliography{hamiltonian}

\end{document}